\newtheorem{theorem}{Theorem}[section]
\newtheorem{lemma}[theorem]{Lemma}
\newtheorem{corollary}[theorem]{Corollary}
\newtheorem{definition}[theorem]{Definition}
\newtheorem{remark}[theorem]{Remark}
\begin{document}

\nocite{*}

\title{Subregular subalgebras and invariant generalized complex structures on Lie groups}

\author{Evgeny Mayanskiy}

\maketitle

\begin{abstract}
  We introduce the notion of a subregular subalgebra, which we believe is useful for classification of subalgebras of Lie algebras. We use it to construct a non-regular invariant generalized complex structure on a Lie group. As an illustration of the study of invariant generalized complex structures, we compute them all for the real forms of $G_2$. 
\end{abstract}

\section{Subregular subalgebras}

Let $\mathfrak{g}$ be a finite-dimensional complex Lie algebra, $k\geq 0$ an integer.

\begin{definition}
A subalgebra $\mathfrak{s}\subset \mathfrak{g}$ is called \textit{subregular} \textit{in codimension $k$} if $\mathfrak{s}$ is normalized by a codimension $k$ subalgebra of a Cartan subalgebra of $\mathfrak{g}$.

If $k\geq 1$, then $\mathfrak{s}\subset \mathfrak{g}$ is called \textit{subregular} \textit{strictly} \textit{in codimension $k$} if $\mathfrak{s}\subset \mathfrak{g}$ is subregular in codimension $k$, but is not subregular in codimension $k-1$.
\end{definition}

Note that any subalgebra $\mathfrak{s}\subset \mathfrak{g}$ is subregular (strictly) in codimension $k$ for some $k\in \{ 0,1,\ldots , \operatorname{rank}(\mathfrak{g})  \}$. Regular subalgebras, as defined in \cite{Chebotarev}, \cite{Dynkin}, are precisely those which are subregular in codimension $0$.\\

This notion may be useful for an explicit classification  of subalgebras of Lie algebras as in \cite{Mayanskiy}. In this note, we demonstrate how it can be applied to construction of invariant generalized complex structures on Lie groups.

\section{Invariant generalized complex structures}

Invariant generalized complex structures on homogeneous spaces were studied in \cite{Milburn} and \cite{Liana}. In particular, Alekseevsky, David and Milburn classified invariant generalized complex structures on Lie groups in terms of the so-called admissible pairs. We will review a part of their classification.\\

Throughout this section, $G_0$ denotes a finite-dimensional connected real Lie group, ${\mathfrak{g}}_0$ the (real) Lie algebra of $G_0$, $\mathfrak{g}=\mathfrak{g}_0 \otimes_{\mathbb R} \mathbb C$ its complexification and $\tau \colon \mathfrak{g} \to \mathfrak{g}$ the corresponding antiinvolution. If $\mathfrak{g}$ is semisimple and ${\mathfrak{h}}_0\subset {\mathfrak{g}}_0$ is a Cartan subalgebra, then $\mathfrak{h}=\mathfrak{h}_0 \otimes_{\mathbb R} \mathbb C\subset \mathfrak{g}$ denotes its complexification and $\Phi \subset {\mathfrak{h}}^{*}$ the root system of $\mathfrak{g}$ with respect to the Cartan subalgebra $\mathfrak{h}$.

\begin{definition}[Alekseevsky-David \cite{Liana}, Milburn \cite{Milburn}]
A \textit{{$\mathfrak{g}_0$}-admissible pair} is a pair $(\mathfrak{s},\omega )$, where $\mathfrak{s}\subset \mathfrak{g}$ is a complex subalgebra and $\omega \in \bigwedge ^2 {\mathfrak{s}}^{*}$ is a closed $2$-form such that:
\begin{itemize}
\item $\mathfrak{s}+\tau (\mathfrak{s})=\mathfrak{g}$, and
\item $\operatorname{Im}(\omega \mid _{{\mathfrak{g}_0}\cap \mathfrak{s}})$ is non-degenerate.
\end{itemize}
\end{definition}

\begin{theorem}[Akelseevsky-David \cite{Liana}, Milburn \cite{Milburn}]\label{theorem:1}
There is a one-to-one correspondence between the invariant generalized complex structures on $G_0$ and the ${\mathfrak{g}_0}$-admissible pairs $(\mathfrak{s},\omega )$.
\end{theorem}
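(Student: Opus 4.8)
The plan is to establish the bijection by unpacking what an invariant generalized complex structure on $G_0$ amounts to at the level of the Lie algebra, and then matching that data to the admissible pair. First I would recall that a generalized complex structure on a manifold is equivalently described by its $+i$-eigenbundle, a maximal isotropic subbundle $L \subset (TG_0 \oplus T^*G_0)\otimes \mathbb{C}$ with respect to the natural symmetric pairing, satisfying $L \cap \bar{L} = 0$ and involutivity under the Courant bracket. For an \emph{invariant} structure, left-translating reduces everything to a single linear-algebraic datum: a maximal isotropic subspace $L \subset (\mathfrak{g}_0 \oplus \mathfrak{g}_0^*)\otimes \mathbb{C} = \mathfrak{g} \oplus \mathfrak{g}^*$ satisfying the transversality condition $L \cap \bar{L} = 0$ (where the bar is complex conjugation induced by $\tau$) and an integrability condition that, because the structure is invariant, becomes a purely algebraic closedness/involutivity condition on $L$ rather than a PDE.

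The core of the argument is the standard linear-algebraic classification of maximal isotropic subspaces of $V \oplus V^*$ for a vector space $V$. I would invoke the fact that every such subspace is of the form $L(\mathfrak{s}, \omega) = \{ X + \xi \in \mathfrak{s} \oplus \mathfrak{g}^* : \xi\mid_{\mathfrak{s}} = \iota_X \omega \}$ for a unique subspace $\mathfrak{s} \subseteq \mathfrak{g}$ and a unique $2$-form $\omega \in \bigwedge^2 \mathfrak{s}^*$; here $\mathfrak{s}$ is the projection of $L$ to $\mathfrak{g}$. This reduces the construction to identifying $L$ with the pair $(\mathfrak{s}, \omega)$, and the bijection becomes the assignment $L \leftrightarrow (\mathfrak{s}, \omega)$. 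It then remains to translate each defining property of a generalized complex structure into the corresponding property of the pair: the transversality $L \cap \bar{L} = 0$ should become exactly the two admissibility conditions $\mathfrak{s} + \tau(\mathfrak{s}) = \mathfrak{g}$ together with the nondegeneracy of $\operatorname{Im}(\omega\mid_{\mathfrak{g}_0 \cap \mathfrak{s}})$, while the Courant-bracket involutivity of $L$ should become the statement that $\mathfrak{s}$ is a subalgebra and $\omega$ is closed for the Chevalley--Eilenberg differential on $\mathfrak{s}$.

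Concretely, I would carry out the steps in this order: (i) set up the Courant algebroid on the trivialized bundle and reduce invariance to the Lie algebra level; (ii) apply the parametrization of maximal isotropics to produce the map $L \mapsto (\mathfrak{s}, \omega)$ and verify it is a bijection onto all maximal isotropic subspaces; (iii) compute $L \cap \bar{L}$ in terms of $(\mathfrak{s}, \omega)$ and show vanishing is equivalent to the two bullet conditions in the definition of a $\mathfrak{g}_0$-admissible pair; and (iv) expand the Courant-bracket closedness of $L$ and show it is equivalent to $\mathfrak{s}$ being a subalgebra and $\omega$ being closed. The main obstacle I anticipate is step (iii): disentangling how the \emph{complex} transversality condition $L \cap \bar{L} = 0$ splits into a \emph{real} spanning condition on $\mathfrak{s} + \tau(\mathfrak{s})$ and a nondegeneracy condition for the imaginary part of $\omega$ restricted to the real subspace $\mathfrak{g}_0 \cap \mathfrak{s}$. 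This requires carefully tracking how the antiinvolution $\tau$ interacts with the isotropic splitting and with the form $\omega$, and checking that the rank count works out so that nondegeneracy of $\operatorname{Im}(\omega\mid_{\mathfrak{g}_0\cap\mathfrak{s}})$ is precisely the residual condition once $\mathfrak{s} + \tau(\mathfrak{s}) = \mathfrak{g}$ is imposed; the involutivity computation in step (iv), while technical, should follow the usual pattern relating the Courant bracket to the Lie bracket and the de Rham/Chevalley--Eilenberg differential.
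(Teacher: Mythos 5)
The paper does not prove this theorem at all --- it is imported verbatim from the cited sources (Alekseevsky--David and Milburn), so there is no in-paper proof to compare against. Your outline correctly reconstructs the standard argument those sources use: left-trivialize to reduce an invariant generalized complex structure to its $+i$-eigenspace $L \subset \mathfrak{g}\oplus\mathfrak{g}^{*}$, apply Gualtieri's parametrization of maximal isotropics by pairs $(\mathfrak{s},\omega)$, identify $L\cap\bar{L}=0$ with the two admissibility conditions (including the rank-count reduction to nondegeneracy of $\operatorname{Im}(\omega\mid_{\mathfrak{g}_0\cap\mathfrak{s}})$ on $\mathfrak{s}\cap\tau(\mathfrak{s})=(\mathfrak{g}_0\cap\mathfrak{s})\otimes_{\mathbb R}\mathbb C$), and identify Courant involutivity with $\mathfrak{s}$ being a subalgebra and $\omega$ being Chevalley--Eilenberg closed --- so the proposal is sound and matches the proof in the literature that the paper relies on.
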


The following notion was introduced by Alekseevsky and David \cite{Liana}. 

\begin{definition}[Akelseevsky-David \cite{Liana}]
An invariant generalized complex structure on $G_0$ is called \textit{regular} if the associated subalgebra $\mathfrak{s}\subset \mathfrak{g}$ is normalized by a Cartan subalgebra of $\mathfrak{g}_0$.
\end{definition}

The following theorem strengthens \cite{Liana}, Theorem~$15$ and completes the classification of invariant generalized complex structures on finite-dimensional compact connected real semisimple Lie groups.

\begin{theorem}\label{theorem:main}
If $G_0$ is a finite-dimensional compact connected real semisimple Lie group, then any invariant generalized complex structure on $G_0$ is regular.
\end{theorem}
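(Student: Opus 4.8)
The plan is to translate the statement into the language of admissible pairs via Theorem~\ref{theorem:1} and then to show that admissibility, combined with compactness of $\mathfrak{g}_0$, forces the associated subalgebra to be normalized by a Cartan subalgebra. Let $(\mathfrak{s},\omega)$ be the admissible pair corresponding to a given invariant generalized complex structure, and set $\mathfrak{k}=\mathfrak{g}_0\cap\mathfrak{s}$ and $\mathfrak{d}=\mathfrak{s}\cap\tau(\mathfrak{s})$. First I would record two elementary observations: since $\tau$ fixes $\mathfrak{g}_0$ pointwise, $\mathfrak{d}$ is $\tau$-stable with fixed-point set $\mathfrak{k}$, so $\mathfrak{d}=\mathfrak{k}\otimes_{\mathbb{R}}\mathbb{C}$; and from $\mathfrak{s}+\tau(\mathfrak{s})=\mathfrak{g}$ one gets $\dim_{\mathbb{C}}\mathfrak{s}=\tfrac12(\dim_{\mathbb{C}}\mathfrak{g}+\dim_{\mathbb{R}}\mathfrak{k})$. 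Regularity of the structure is equivalent to $\mathfrak{s}$ being normalized by the complexification of a Cartan subalgebra of $\mathfrak{g}_0$, i.e. by a $\tau$-stable Cartan subalgebra of $\mathfrak{g}$, which is exactly subregularity in codimension $0$.

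The first substantive step is to show that $\mathfrak{k}$ is abelian. Restricting the closed form $\omega$ to the subalgebra $\mathfrak{d}\subset\mathfrak{s}$ and taking the imaginary part on the real form $\mathfrak{k}$ produces a closed $2$-form $\sigma=\operatorname{Im}(\omega\mid_{\mathfrak{k}})$, which is non-degenerate by admissibility; thus $\mathfrak{k}$ is a symplectic Lie algebra. Being a subalgebra of the compact Lie algebra $\mathfrak{g}_0$, $\mathfrak{k}$ is reductive, $\mathfrak{k}=\mathfrak{z}\oplus[\mathfrak{k},\mathfrak{k}]$. Evaluating $d\sigma=0$ on one central and two semisimple arguments and using $[[\mathfrak{k},\mathfrak{k}],[\mathfrak{k},\mathfrak{k}]]=[\mathfrak{k},\mathfrak{k}]$ shows that $\mathfrak{z}$ and $[\mathfrak{k},\mathfrak{k}]$ are $\sigma$-orthogonal, so $\sigma$ restricts to a symplectic form on $[\mathfrak{k},\mathfrak{k}]$. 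Since a nonzero semisimple Lie algebra admits no symplectic form (by Whitehead's lemma $H^2=0$ any closed invariant $2$-form is exact, and the explicit radical computation then exhibits a kernel), I conclude $[\mathfrak{k},\mathfrak{k}]=0$. Hence $\mathfrak{k}$ is abelian, and as every element of a compact Lie algebra is semisimple, $\mathfrak{d}=\mathfrak{k}\otimes_{\mathbb{R}}\mathbb{C}$ is a toral subalgebra of $\mathfrak{g}$.

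It then remains to prove the purely Lie-algebraic assertion: if $\tau$ is a compact conjugation and $\mathfrak{s}\subset\mathfrak{g}$ is a complex subalgebra with $\mathfrak{s}+\tau(\mathfrak{s})=\mathfrak{g}$ and $\mathfrak{s}\cap\tau(\mathfrak{s})$ toral, then $\mathfrak{s}$ is normalized by a $\tau$-stable Cartan subalgebra. I would fix a maximal torus $\mathfrak{t}_0\subset\mathfrak{g}_0$ containing $\mathfrak{k}$, with complexification the $\tau$-stable Cartan subalgebra $\mathfrak{h}\supseteq\mathfrak{d}$, and decompose $\mathfrak{g}=\bigoplus_\lambda\mathfrak{g}^{(\lambda)}$ into $\mathfrak{d}$-weight spaces, with $\mathfrak{l}=\mathfrak{g}^{(0)}=\mathfrak{z}_{\mathfrak{g}}(\mathfrak{d})$ reductive and $\tau$-stable. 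Because $\mathfrak{d}\subseteq\mathfrak{s}$, the subalgebra $\mathfrak{s}$ is $\mathfrak{d}$-graded, and a short computation gives, for each nonzero weight, the splitting $\mathfrak{g}^{(\lambda)}=(\mathfrak{s}\cap\mathfrak{g}^{(\lambda)})\oplus\tau(\mathfrak{s}\cap\mathfrak{g}^{(-\lambda)})$, while $\mathfrak{s}_0=\mathfrak{s}\cap\mathfrak{l}$ satisfies $\mathfrak{s}_0+\tau(\mathfrak{s}_0)=\mathfrak{l}$ and $\mathfrak{s}_0\cap\tau(\mathfrak{s}_0)=\mathfrak{d}$. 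In the core case $\mathfrak{d}=0$ one has $\mathfrak{l}=\mathfrak{g}$ and $\mathfrak{s}\oplus\tau(\mathfrak{s})=\mathfrak{g}$, which is precisely the datum of an invariant complex structure on $G_0$; here Samelson's classification of such structures on compact Lie groups shows that $\mathfrak{s}$ contains the nilradical of a Borel subalgebra together with a complement of $\mathfrak{t}_0$ in $\mathfrak{h}$, so $\mathfrak{s}$ is manifestly $\mathfrak{h}$-stable, hence regular.

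The hard part will be the general case $\mathfrak{d}\neq0$, where one must also align the nonzero weight spaces: I must show that each $\mathfrak{s}\cap\mathfrak{g}^{(\lambda)}$ is a sum of full root spaces $\mathfrak{g}_\alpha$ rather than a skew ``diagonal'' subspace, for only then does $\mathfrak{h}$ normalize all of $\mathfrak{s}$ and not merely $\mathfrak{s}_0$. The tool I expect to be decisive is the positive-definite Hermitian form $B(x,y)=-\kappa(x,\tau(y))$ attached to the compact form, under which distinct root spaces are orthogonal and each weight-space splitting above is $B$-orthogonal; combining this positivity with the subalgebra condition $[\mathfrak{s},\mathfrak{s}]\subseteq\mathfrak{s}$ and a Samelson-type analysis inside the smaller reductive algebra $\mathfrak{l}$ (inducting on $\dim\mathfrak{g}$) should force the required alignment and propagate it from $\mathfrak{l}$ to the higher weight spaces. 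This positivity is exactly what fails for non-compact real forms, consistent with the existence of non-regular invariant generalized complex structures there, so I expect it to be the crux of the whole argument.
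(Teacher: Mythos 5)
Your preparatory steps are sound: the passage to an admissible pair, the identification $\mathfrak{s}\cap\tau(\mathfrak{s})=(\mathfrak{g}_0\cap\mathfrak{s})\otimes_{\mathbb{R}}\mathbb{C}$, the argument that the compact symplectic Lie algebra $\mathfrak{k}=\mathfrak{g}_0\cap\mathfrak{s}$ is abelian (exactness of $\sigma$ on $[\mathfrak{k},\mathfrak{k}]$ plus nontrivial coadjoint stabilizers), hence that $\mathfrak{d}=\mathfrak{s}\cap\tau(\mathfrak{s})$ is toral, and the $B$-orthogonal weight-space splitting $\mathfrak{g}^{(\lambda)}=(\mathfrak{s}\cap\mathfrak{g}^{(\lambda)})\oplus\tau(\mathfrak{s}\cap\mathfrak{g}^{(-\lambda)})$ are all correct, and the case $\mathfrak{d}=0$ can indeed be closed by the Samelson--Pittie classification of invariant complex structures. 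But the proposal then stops exactly where the theorem begins: for $\mathfrak{d}\neq 0$ you offer only an expectation (``should force the required alignment,'' ``I expect it to be the crux'') rather than an argument. That alignment step is the entire content of the statement --- the paper's own Section 3 example $\mathbb{C}(H+X_{n-1,n})\subset\mathfrak{s}$ shows that for non-compact $\tau$ a subalgebra satisfying $\mathfrak{s}+\tau(\mathfrak{s})=\mathfrak{g}$ with toral intersection genuinely can contain diagonal lines not normalized by any Cartan, so compactness must enter in an essential quantitative way, and positivity of $B(x,y)=-\kappa(x,\tau(y))$ alone cannot do it: a diagonal line inside $\mathfrak{g}_\alpha\oplus\mathfrak{g}_\beta$ is perfectly compatible with all the orthogonality and dimension constraints you list, and the obstruction has to come from bracket-closure, for which you give no mechanism. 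Your proposed induction is also not set up to close: knowing that $\mathfrak{h}$ normalizes $\mathfrak{s}_0=\mathfrak{s}\cap\mathfrak{l}$ says nothing about the subspaces $\mathfrak{s}\cap\mathfrak{g}^{(\lambda)}$ being sums of root spaces, and the inductive statement you would need (about subalgebras with toral intersection in the reductive, non-semisimple $\mathfrak{l}$, where $\mathfrak{d}$ is central) is never formulated.

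It is worth noting how the paper avoids this problem entirely: it never analyzes $\mathfrak{s}$ weight space by weight space, but passes to the normalizer $N(\mathfrak{s})$, which still satisfies $N(\mathfrak{s})+\tau(N(\mathfrak{s}))=\mathfrak{g}$. Malcev's theorem guarantees that $N(\mathfrak{s})\cap\mathfrak{g}_0$ generates a closed subgroup of $G_0$, so the Wang-based argument of Alekseevsky--David (their Theorem 15) applies to $N(\mathfrak{s})$ and shows it is regular, of the form $L\oplus\bigoplus_{\alpha\in R}\mathbb{C}X_\alpha$ with the crucial extra structure that $L\subset\mathfrak{h}$ is cut out by equations $\alpha-\beta=0$. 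Since $\tau$ acts as $-\operatorname{Id}$ on $\mathfrak{h}(\mathbb{R})$ in the compact case, such an $L$ is $\tau$-stable, so $L+\tau(L)=\mathfrak{h}$ forces $L=\mathfrak{h}$, and then $\mathfrak{h}_0\subset N(\mathfrak{s})$ normalizes $\mathfrak{s}$ --- regularity of $\mathfrak{s}$ itself is never proved directly, only regularity of its normalizer, which automatically absorbs the Cartan. If you want to rescue your approach, you need either a genuine proof of the alignment step (a bracket-closure argument exploiting compactness, not just positivity of $B$) or a device like the paper's normalizer trick that sidesteps it; as written, your text is a program for a proof, not a proof.
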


\begin{proof}
Let $\mathfrak{s}\subset \mathfrak{g}$ be the complex subalgebra associated by Theorem~\ref{theorem:1} to an invariant generalized complex structure on $G_0$. Let $N (\mathfrak{s})\subset \mathfrak{g}$ be its normalizer.\\

By \cite{MalcevLarge}, Theorem~$13$, $N (\mathfrak{s})\cap \mathfrak{g}_0$ generates a closed subgroup of $G_0$. The same argument as in \cite{Liana}, Theorem~$15$, using \cite{Wang}, implies that $N (\mathfrak{s})$ is normalized by a Cartan subalgebra $\mathfrak{h}_0\subset \mathfrak{g}_0$, i.e. 
$$
N (\mathfrak{s}) = L\oplus \bigoplus\limits_{\alpha \in R}\mathbb C X_{\alpha},
$$
where $R\subset \Phi$ is a closed subset, $X_{\alpha}$, $\alpha \in \Phi$, are root vectors of $\mathfrak{g}$ with respect to the Cartan subalgebra $\mathfrak{h}=\mathfrak{h}_0 \otimes_{\mathbb R} \mathbb C$, and $L\subset \mathfrak{h}$ is the solution set of a system of equations of the form $\alpha -\beta =0$, $\alpha , \beta \in \Phi$.\\

Since $\tau$ is the conjugation with respect to a compact real form $\mathfrak{g}_0$ of $\mathfrak{g}$ and $\tau (\mathfrak{h})=\mathfrak{h}$,
$$
\tau \mid _{\mathfrak{h}(\mathbb R)}=-\operatorname{Id}_{\mathfrak{h}(\mathbb R)},\quad \tau (\mathbb C X_{\alpha})=\mathbb C X_{-\alpha},
$$
where $\mathfrak{h}(\mathbb R)$ is the real span in $\mathfrak{h}$ of the coroots of $\mathfrak{g}$ with respect to $\mathfrak{h}$ \cite{Helgason}.\\

Since $N (\mathfrak{s}) +\tau (N (\mathfrak{s}))=\mathfrak{g}$, $L +\tau (L)=\mathfrak{h}$, which is possible only if $L=\mathfrak{h}$. Hence $\mathfrak{h}_0\subset N(\mathfrak{s})$ normalizes $\mathfrak{s}$.

\end{proof}

In general, not all invariant generalized complex structures on real semisimple Lie groups are regular. Let $G_0$ be a finite-dimensional connected real Lie group, $k\geq 0$ an integer.

\begin{definition}
An invariant generalized complex structure $\mathcal J$ on $G_0$ is called \textit{subregular} \textit{in codimension $k$} if the associated subalgebra $\mathfrak{s}\subset \mathfrak{g}$ is normalized by a codimension $k$ subalgebra of a Cartan subalgebra of $\mathfrak{g}_0$.

If $k\geq 1$, then $\mathcal J$ is called \textit{subregular} \textit{strictly} \textit{in codimension $k$} if $\mathcal J$ is subregular in codimension $k$, but is not subregular in codimension $k-1$.
\end{definition}

We illustrate this notion with an example of a non-regular invariant generalized complex structure on $SO_0(2n-1,1)$, $n\geq 3$ even. 

\section{A non-regular invariant generalized complex structure on $SO_0(2n-1,1)$}

Let $G_0=SO_0(2n-1,1)$, $n\geq 3$. Then ${\mathfrak{g}_0}=\mathfrak{so}(2n-1,1)$ is a noncompact real form of $\mathfrak{g}=\mathfrak{so}_{2n}(\mathbb C)$. We interpret $\mathfrak{g}$ as the Lie algebra of $2n \times 2n$ skew symmetric complex matrices. Then 
$$
\tau \colon \mathfrak{g} \to \mathfrak{g}, \; A\mapsto J\cdot \bar{A}\cdot J,\quad J=\operatorname{diag}(\; \underbrace{1 \; 1 \; \cdots \; 1}_{2n-1} \; (-1)\; ),
$$
is the conjugation with respect to $\mathfrak{g}_0$, where bar denotes the usual complex conjugation.\\

Let $E_{ij}$, $1\leq i,j \leq 2n$, be a $2n\times 2n$ matrix with $1$ in the $(i,j)^{\text{th}}$ place and $0$ elsewhere. Following \cite{Helgason}, define
\begin{align*}
H_k & =\sqrt{-1}\cdot \left( E_{2k-1,2k} - E_{2k,2k-1} \right),\; 1\leq k\leq n,\\
G_{jk}^{+} & =E_{2j-1,2k-1} - E_{2k-1,2j-1}+E_{2j,2k} - E_{2k,2j}+\sqrt{-1}\cdot \left( E_{2j-1,2k} - E_{2j,2k-1}-E_{2k,2j-1} + E_{2k-1,2j} \right),\\
G_{kj}^{+} & =-\overline{G_{jk}^{+}},\; 1\leq j<k\leq n,\\
G_{jk}^{-} & =E_{2j-1,2k-1} - E_{2k-1,2j-1}-E_{2j,2k} + E_{2k,2j}+\sqrt{-1}\cdot \left( E_{2j-1,2k} + E_{2j,2k-1}-E_{2k,2j-1} - E_{2k-1,2j} \right),\\
G_{kj}^{-} & =-\overline{G_{jk}^{-}},\; 1\leq j<k\leq n.
\end{align*}

Then $\mathfrak{h}=\bigoplus\limits_{k=1}^{n} \mathbb C H_{k}$ is a Cartan subalgebra of $\mathfrak{g}$. Let ${\epsilon}_k \in {\mathfrak{h}}^{*}$, $1\leq k\leq n$, be such that ${\epsilon}_k (H_j)=1$ if $j=k$ and $0$ otherwise. Then
$$
\Phi = \{ {\epsilon}_j - {\epsilon}_k \; \mid \; 1\leq j\neq k\leq n  \} \cup \{ \pm ({\epsilon}_j + {\epsilon}_k) \; \mid \; 1\leq j< k\leq n  \}
$$
is the root system of $(\mathfrak{g},\mathfrak{h})$. Let us choose the following root vectors: 
\begin{align*}
X_{jk}=X_{{\epsilon}_j-{\epsilon}_k}=G_{jk}^{+},\; 1\leq j\neq k \leq n,\\
Y_{jk}=X_{{\epsilon}_j+{\epsilon}_k}=G_{kj}^{-},\; 1\leq j< k \leq n,\\
Z_{jk}=X_{-({\epsilon}_j+{\epsilon}_k)}=G_{jk}^{-},\; 1\leq j< k \leq n.
\end{align*} 

Note that 
$$
[Y_{jk},Z_{jk}]=4\cdot (H_j + H_k),\; [X_{jk},X_{kj}]=4\cdot (H_j - H_k),\; 1\leq j<k\leq n.
$$

Let $\mathfrak{h}_1\subset \mathfrak{h}$ be a hyperplane cut out by the equation ${\epsilon}_{n-1}-{\epsilon}_{n}=0$, $L\subsetneq \mathfrak{h}_1$ a vector subspace containing $H_{n-1}+H_n$, and $H\in \mathfrak{h}_1\setminus L$. Define
$$
\mathfrak{s}=L\oplus \mathbb C (H+X_{n-1,n})\oplus \bigoplus\limits_{\substack{1\leq j<k\leq n \\ (j,k)\neq (n-1,n)}} \mathbb C X_{jk}\;\oplus \bigoplus\limits_{1\leq j<k\leq n} \mathbb C Y_{jk}\;\oplus \; \mathbb C Z_{n-1,n}
$$
$$
\subset \mathfrak{g}=\mathfrak{h}\; \oplus \bigoplus\limits_{1\leq j\neq k\leq n} \mathbb C X_{jk}\; \oplus \bigoplus\limits_{1\leq j<k\leq n} \left( \mathbb C Y_{jk}\oplus \mathbb C Z_{jk} \right) .
$$

\begin{lemma}\label{lemma:22}
The subalgebra $\mathfrak{s}\subset \mathfrak{g}$ is subregular strictly in codimension $1$.
\end{lemma}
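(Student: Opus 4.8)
The plan is to prove the two halves of ``strictly in codimension $1$'' separately: first that $\mathfrak{s}$ is subregular in codimension $1$, and then that it is not regular, i.e. not subregular in codimension $0$. For the first half I would simply check that the hyperplane $\mathfrak{h}_1\subset\mathfrak{h}$, which has codimension $1$ in the Cartan subalgebra $\mathfrak{h}$, normalizes $\mathfrak{s}$. On each summand this is immediate: any $H'\in\mathfrak{h}_1$ commutes with $L\subseteq\mathfrak{h}$ and scales each listed root vector $X_{jk}$, $Y_{jk}$, $Z_{n-1,n}$; and on the only mixed generator one has $[H',H+X_{n-1,n}]=({\epsilon}_{n-1}-{\epsilon}_n)(H')\,X_{n-1,n}=0$ for $H'\in\mathfrak{h}_1$, precisely because $\mathfrak{h}_1$ is cut out by ${\epsilon}_{n-1}-{\epsilon}_n=0$. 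Thus $[\mathfrak{h}_1,\mathfrak{s}]\subseteq\mathfrak{s}$, which is the routine half.

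The substance is to show $\mathfrak{s}$ is not regular, and for this I would use the conjugation-invariant property of being \emph{closed under Jordan decomposition} (that is, $x\in\mathfrak{s}\Rightarrow x_s,x_n\in\mathfrak{s}$). First I would record that $\mathfrak{s}\cap\mathfrak{h}=L$, so in particular $H\notin\mathfrak{s}$; indeed any element of $\mathfrak{s}$ lying in $\mathfrak{h}$ must have vanishing coefficient on every root vector, hence also on $H+X_{n-1,n}$, and so lies in $L$. Now consider $x=H+X_{n-1,n}\in\mathfrak{s}$: since $H\in\mathfrak{h}_1$ kills ${\epsilon}_{n-1}-{\epsilon}_n$, the summands $H$ and $X_{n-1,n}$ commute, so this is the abstract Jordan decomposition, with $x_s=H$ semisimple and $x_n=X_{n-1,n}$ nilpotent. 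Since $H\notin\mathfrak{s}$, the subalgebra $\mathfrak{s}$ fails to be closed under Jordan decomposition.

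Next I would argue the reduction: if $\mathfrak{s}$ were regular it would be normalized by some Cartan subalgebra $\mathfrak{h}'$ of $\mathfrak{g}$; as all Cartan subalgebras of $\mathfrak{g}$ are conjugate under $\operatorname{Inn}(\mathfrak{g})$, there is $g$ with $g\mathfrak{h}'=\mathfrak{h}$, so $g\mathfrak{s}$ is $\mathfrak{h}$-stable. Inner automorphisms preserve Jordan decomposition, hence preserve closedness under it, so it would suffice to know that every $\mathfrak{h}$-stable subalgebra is closed under Jordan decomposition: this would force $\mathfrak{s}$ to be closed, contradicting the previous paragraph and finishing the proof.

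The main obstacle is exactly this last lemma, and I would prove it as follows. Write the $\mathfrak{h}$-stable subalgebra as $\mathfrak{r}=\mathfrak{t}\oplus\bigoplus_{\alpha\in R'}\mathbb{C}X_\alpha$ with $\mathfrak{t}=\mathfrak{r}\cap\mathfrak{h}$ and $R'$ closed, and enlarge it to $\mathfrak{r}^{+}=\mathfrak{r}+\mathfrak{h}=\mathfrak{h}\oplus\bigoplus_{\alpha\in R'}\mathbb{C}X_\alpha$. Since $\mathfrak{r}^{+}$ contains $\mathfrak{h}$ it is algebraic (the Lie algebra of the subgroup generated by the maximal torus and the root groups $U_\alpha$, $\alpha\in R'$), hence closed under Jordan decomposition, so for $x\in\mathfrak{r}\subseteq\mathfrak{r}^{+}$ we get $x_s,x_n\in\mathfrak{r}^{+}$. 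Because $\mathfrak{h}$ normalizes $\mathfrak{r}$, the subalgebra $\mathfrak{r}$ is an ideal of $\mathfrak{r}^{+}$ with abelian quotient $\mathfrak{r}^{+}/\mathfrak{r}\cong\mathfrak{h}/\mathfrak{t}$, so $[\mathfrak{r}^{+},\mathfrak{r}^{+}]\subseteq\mathfrak{r}$. It then remains to place the nilpotent element $x_n$ inside $[\mathfrak{r}^{+},\mathfrak{r}^{+}]$: using the Levi decomposition $\mathfrak{r}^{+}=\mathfrak{l}\ltimes\mathfrak{u}$, with $\mathfrak{l}$ the reductive part built from the symmetric roots $R'\cap(-R')$ and $\mathfrak{u}$ the nilradical from the remaining roots, the image of $x_n$ in $\mathfrak{l}$ is nilpotent, hence lies in $[\mathfrak{l},\mathfrak{l}]$ and in particular has Cartan component in $\langle H_\alpha:\pm\alpha\in R'\rangle\subseteq\mathfrak{t}$; this forces $x_n\in[\mathfrak{r}^{+},\mathfrak{r}^{+}]\subseteq\mathfrak{r}$, whence also $x_s=x-x_n\in\mathfrak{r}$. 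I expect the bookkeeping in this lemma — verifying that $\mathfrak{r}$ is an ideal, identifying $[\mathfrak{r}^{+},\mathfrak{r}^{+}]$, and setting up the Levi decomposition — to be the only genuinely delicate part; the rest reduces to the short direct computations above.
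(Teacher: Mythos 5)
Your proof is correct, and it rests on the same underlying obstruction as the paper's: the mixed generator $H+X_{n-1,n}$, whose Jordan components $H$ and $X_{n-1,n}$ commute (because $({\epsilon}_{n-1}-{\epsilon}_n)(H)=0$) yet do not separately belong to $\mathfrak{s}$, since $\mathfrak{s}\cap\mathfrak{h}=L\not\ni H$. The execution, however, is genuinely different. The paper's proof is a one-line appeal to the radical: for suitable $l\in L$ the element $l+H+X_{n-1,n}$ lies in $\operatorname{rad}(\mathfrak{s})$ while its nilpotent component $X_{n-1,n}$ does not, implicitly invoking the classical fact that a regular subalgebra --- and hence its radical, a characteristic ideal, likewise stable under the normalizing Cartan subalgebra --- is splittable, i.e.\ closed under Jordan decomposition. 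You bypass the radical entirely, work with $\mathfrak{s}$ itself, and prove the needed splittability statement from scratch: $\mathfrak{r}^{+}=\mathfrak{r}+\mathfrak{h}$ is algebraic (torus plus root groups over the closed set $R'$), $[\mathfrak{r}^{+},\mathfrak{r}^{+}]\subseteq[\mathfrak{r},\mathfrak{r}]+[\mathfrak{h},\mathfrak{r}]\subseteq\mathfrak{r}$, and the nilpotent part of any element lands in $[\mathfrak{r}^{+},\mathfrak{r}^{+}]$ via the Levi decomposition of the closed root set. Together with the conjugacy of Cartan subalgebras of $\mathfrak{g}$ and the invariance of Jordan decomposition under inner automorphisms, this gives a self-contained proof where the paper relies on citation-level background; what the paper's route buys is brevity and a statement (membership in the radical) that is manifestly conjugation-invariant, while yours buys transparency and avoids having to identify $\operatorname{rad}(\mathfrak{s})$ or the ``suitable $l$'' at all.

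One step of your key lemma should be made explicit: the inference ``the image of $x_n$ in $\mathfrak{l}$ is nilpotent, hence lies in $[\mathfrak{l},\mathfrak{l}]$'' does not follow from ad-nilpotency in $\mathfrak{l}$ alone, since central elements of the reductive $\mathfrak{l}$ are ad-trivial yet semisimple (indeed, when $R'\cap(-R')=\emptyset$ one has $\mathfrak{l}=\mathfrak{h}$ and every element is ad-nilpotent). You need nilpotency in the algebraic sense: the projection $\mathfrak{r}^{+}\to\mathfrak{l}$ is the differential of the algebraic quotient $R^{+}\to R^{+}/U$, which preserves Jordan decomposition, so the image of $x_n$ is genuinely nilpotent in $\mathfrak{l}$ and its central component vanishes. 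Alternatively, argue elementarily: choose $h_0\in\mathfrak{h}$ with $\alpha(h_0)=0$ for $\alpha\in R'\cap(-R')$ and $\alpha(h_0)>0$ for $\alpha$ in the asymmetric part; then $\operatorname{Ad}(\exp(th_0))\,x_n$ tends to the $\mathfrak{l}$-component of $x_n$ as $t\to-\infty$, which is therefore nilpotent in $\mathfrak{g}$ because the nilpotent cone is closed. With this point filled in, your argument is complete.
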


\begin{proof}

By construction, $\mathfrak{s}$ is normalized by a codimension $1$ subalgebra $\mathfrak{h}_1\subset \mathfrak{h}$. At the same time, $\mathfrak{s}$ is not regular, because, for a suitable $l\in L$, $l+H+X_{n-1,n}$ lies in the radical of $\mathfrak{s}$ but its nilpotent component $X_{n-1,n}$ does not. \end{proof}

Note that $s+\tau (s)=\mathfrak{g}$ if and only if $L+\mathbb C H + \tau (L+\mathbb C H)=\mathfrak{h}$.\\

To illustrate the general idea, let us assume for simplicity that $H=H_1$, $L=\bigoplus\limits_{k=2}^{n-2}\mathbb C H_k \; \oplus \mathbb C (H_{n-1}+H_n)$. Then 
$$
\mathfrak{s}\cap \mathfrak{g}_0 = \{\; \sqrt{-1}\cdot b_1 \cdot (H_1+X_{n-1,n}-Z_{n-1,n}) + \sum\limits_{j=2}^{n-2} \sqrt{-1}\cdot b_j \cdot H_j \; \mid \; b_j\in \mathbb R \;\}
$$

is a real abelian Lie algebra of dimension $n-2$. If $n$ is even, $\mathfrak{s}\cap \mathfrak{g}_0$ carries a symplectic form ${\omega}_0$, which may be any non-degenerate $2$-form on the real vector space $\mathfrak{s}\cap \mathfrak{g}_0=\mathbb R \sqrt{-1} (H_1+X_{n-1,n}-Z_{n-1,n})\;\oplus \bigoplus\limits_{j=2}^{n-2}\mathbb R \sqrt{-1}H_j\cong \mathbb R ^{n-2}$. One can extend $\sqrt{-1}{\omega}_0$ to a closed $2$-form $\omega \in \bigwedge ^2 {\mathfrak{s}}^{*}$. Assume that ${\epsilon}_j$, $1\leq j \leq n$, vanish on the root vectors of $\mathfrak{g}$. This proves

\begin{theorem}\label{theorem:22}
Let $n\geq 4$ be even, $\mathfrak{s}\subset \mathfrak{g}$ the complex subalgebra defined above, $H=H_1$, $L=\bigoplus\limits_{k=2}^{n-2}\mathbb C H_k \; \oplus \mathbb C (H_{n-1}+H_n)$, and $\omega \in \bigwedge ^2 {\mathfrak{s}}^{*}$ a closed $2$-form such that
$$
\omega \mid_{\mathfrak{s}\cap \mathfrak{g}_0}=\sqrt{-1}\cdot \sum\limits_{j=1}^{\frac{n}{2}-1} {\epsilon}_{2j-1}\wedge {\epsilon}_{2j}.
$$
Then $(\mathfrak{s},\omega)$ is a $\mathfrak{g}_0$-admissible pair and defines a non-regular invariant generalized complex structure on $SO_0(2n-1,1)$. 
\end{theorem}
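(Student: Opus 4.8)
The plan is to verify the two conditions in the definition of a $\mathfrak{g}_0$-admissible pair directly from the explicit description of $\mathfrak{s}$ and $\tau$, since most of the structural work has already been done. First I would check the surjectivity condition $\mathfrak{s}+\tau(\mathfrak{s})=\mathfrak{g}$. By the remark preceding the theorem, this reduces to showing $L+\mathbb{C}H+\tau(L+\mathbb{C}H)=\mathfrak{h}$, so the root-vector parts take care of themselves once one checks that each $X_{jk}$, $Y_{jk}$, $Z_{jk}$ appearing in $\mathfrak{g}$ is accounted for by $\mathfrak{s}$ and $\tau(\mathfrak{s})$ together. Using $\tau\mid_{\mathfrak{h}(\mathbb{R})}=-\operatorname{Id}$ (equivalently the given formula $\tau(A)=J\bar{A}J$) with the specific choice $H=H_1$ and $L=\bigoplus_{k=2}^{n-2}\mathbb{C}H_k\oplus\mathbb{C}(H_{n-1}+H_n)$, one computes $L+\mathbb{C}H_1+\tau(L+\mathbb{C}H_1)$ and confirms it spans all of $\mathfrak{h}=\bigoplus_{k=1}^{n}\mathbb{C}H_k$; the only subtle point is whether the single vector $H_{n-1}+H_n$ together with its conjugate recovers both $H_{n-1}$ and $H_n$, which it does since $\tau$ acts with opposite signs and combined with the $\sqrt{-1}$ factors hidden in the $H_k$.

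Next I would establish the nondegeneracy condition, namely that $\operatorname{Im}(\omega\mid_{\mathfrak{g}_0\cap\mathfrak{s}})$ is nondegenerate. The excerpt has already identified $\mathfrak{s}\cap\mathfrak{g}_0$ explicitly as the real $(n-2)$-dimensional abelian space spanned by $\sqrt{-1}(H_1+X_{n-1,n}-Z_{n-1,n})$ and the $\sqrt{-1}H_j$ for $2\leq j\leq n-2$. Since $n$ is even, $n-2$ is even, so this space admits a nondegenerate alternating form, and the prescribed restriction $\omega\mid_{\mathfrak{s}\cap\mathfrak{g}_0}=\sqrt{-1}\sum_{j=1}^{n/2-1}\epsilon_{2j-1}\wedge\epsilon_{2j}$ is manifestly nondegenerate on this $(n-2)$-dimensional space once one checks that the functionals $\epsilon_1,\ldots,\epsilon_{n-2}$ restrict to a basis of the dual. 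Here I would use the convention stated just before the theorem that the $\epsilon_j$ vanish on the root vectors of $\mathfrak{g}$, so that on the spanning vector $\sqrt{-1}(H_1+X_{n-1,n}-Z_{n-1,n})$ only the $\sqrt{-1}H_1$ part contributes and $\epsilon_j$ evaluates cleanly. I must also confirm that $\omega$ can be extended from $\mathfrak{s}\cap\mathfrak{g}_0$ to a genuinely closed $2$-form on all of $\mathfrak{s}$, as asserted in the paragraph above the theorem; closedness with respect to the Chevalley--Eilenberg differential is where I would need to choose the extension carefully rather than arbitrarily.

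The main obstacle I anticipate is precisely this closedness requirement, $d\omega=0$ in $\bigwedge^3\mathfrak{s}^*$, since a naive extension of a form prescribed only on $\mathfrak{s}\cap\mathfrak{g}_0$ need not be closed on the full complex subalgebra $\mathfrak{s}$. One must exhibit an actual closed extension; the structure-constant computations $[Y_{jk},Z_{jk}]=4(H_j+H_k)$ and $[X_{jk},X_{kj}]=4(H_j-H_k)$ recorded in the excerpt are what control the values of $d\omega$ on the relevant triples, so I would use the abelian or near-abelian behaviour on the chosen subspace together with a suitable choice of $\omega$ on the complementary root directions to kill the cocycle condition. Since the prescribed form lives on the toral-type directions where the bracket lands in $\mathfrak{h}$ and the $\epsilon_j$ vanish on root vectors, a diagonal-type extension should be closed, but verifying $d\omega=0$ term by term is the genuinely nonroutine step.

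Once both admissibility conditions hold, Theorem~\ref{theorem:1} immediately yields that $(\mathfrak{s},\omega)$ defines an invariant generalized complex structure on $SO_0(2n-1,1)$, and non-regularity follows at once from Lemma~\ref{lemma:22}, which shows $\mathfrak{s}$ is subregular strictly in codimension $1$ and hence not normalized by any full Cartan subalgebra of $\mathfrak{g}_0$; thus the associated structure cannot be regular. I would conclude by invoking these two facts in sequence.
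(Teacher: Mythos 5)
Your proposal follows essentially the same route as the paper: the paper's entire argument is the paragraph preceding the theorem (reduce $\mathfrak{s}+\tau(\mathfrak{s})=\mathfrak{g}$ to $L+\mathbb{C}H+\tau(L+\mathbb{C}H)=\mathfrak{h}$ via the stated remark, identify $\mathfrak{s}\cap\mathfrak{g}_0$ as an abelian $(n-2)$-dimensional real space, use $n$ even for nondegeneracy, and extend by declaring the $\epsilon_j$ to vanish on root vectors), followed by Theorem~\ref{theorem:1} and Lemma~\ref{lemma:22}. Your treatment of closedness is in fact more careful than the paper's bare assertion, and your instinct there is right: since the only pair of opposite roots occurring in $\mathfrak{s}$ is $\pm(\epsilon_{n-1}+\epsilon_n)$, the $\mathfrak{h}$-component of any bracket in $\mathfrak{s}$ lies in $\mathbb{C}(H_{n-1}+H_n)$, which is annihilated by $\epsilon_1,\ldots,\epsilon_{n-2}$; hence every term of $d\omega$ vanishes for the zero-on-root-vectors extension, and the ``diagonal-type'' extension is genuinely closed.

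One concrete error needs fixing, though it does not sink the argument because you immediately state the correct mechanism alongside it. You write that you would use $\tau\mid_{\mathfrak{h}(\mathbb{R})}=-\operatorname{Id}$, calling it ``equivalently'' the formula $\tau(A)=J\bar{A}J$. These are not equivalent: the $-\operatorname{Id}$ formula comes from the proof of Theorem~\ref{theorem:main} and holds only for the \emph{compact} real form ($J=I$). For $\mathfrak{g}_0=\mathfrak{so}(2n-1,1)$ a direct computation with $\tau(A)=J\bar{A}J$ gives $\tau(H_k)=-H_k$ for $k\leq n-1$ but $\tau(H_n)=+H_n$, and this sign asymmetry is precisely what you need: $\tau(H_{n-1}+H_n)=-H_{n-1}+H_n$, so $L+\tau(L)$ recovers both $H_{n-1}$ and $H_n$ and $L+\mathbb{C}H_1+\tau(L+\mathbb{C}H_1)=\mathfrak{h}$. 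Had $\tau$ really acted as $-\operatorname{Id}$ on the real Cartan, each $\mathbb{C}H_k$ would be $\tau$-stable with $L+\mathbb{C}H_1+\tau(L+\mathbb{C}H_1)=L+\mathbb{C}H_1\subsetneq\mathfrak{h}$, and admissibility would fail --- as it must, since by Theorem~\ref{theorem:main} no such non-regular structure exists on the compact form. Your parenthetical ``opposite signs'' remark is the correct statement; replace the $-\operatorname{Id}$ citation with the explicit computation of $\tau(H_k)$ and the proof is complete and matches the paper's.
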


\section{Invariant generalized complex structures on real forms of $G_2$}

In this section, $G_0$ denotes a connected real Lie group whose Lie algebra $\mathfrak{g}_0$ is a real form of the complex simple Lie algebra $\mathfrak{g}$ of type $G_2$, i.e. $\mathfrak{g}_0$ is either the compact real form $G_2^c$ or the normal real form $G_2^n$ of $\mathfrak{g}=G_2$. Let $\tau\colon \mathfrak{g}\to \mathfrak{g}$ be the conjugation with respect to $\mathfrak{g}_0$.\\

Recall that $G_2^n$ has $4$ conjugacy classes of Cartan subalgebras $\mathfrak{l}$: the maximally noncompact, the maximally compact, the one with a single short real root and the one with a single long real root \cite{Sugiura}. The conjugation ${\tau}_n \colon \mathfrak{g}\to \mathfrak{g}$ with respect to $G_2^n$ acts on the root system of $(\mathfrak{g},\mathfrak{l}\otimes_{\mathbb R}\mathbb C)$ as $\operatorname{Id}$, $-\operatorname{Id}$, a reflection through a short root and a reflection through a long root respectively.\\

Let $(\mathfrak{s},\omega)$ be a $\mathfrak{g}_0$-admissible pair corresponding to an invariant generalized complex structure on $G_0$. The subalgebras of the complex simple Lie algebra of type $G_2$ were classified in \cite{Mayanskiy}. We will use the notation of \cite{Dynkin} and \cite{Mayanskiy}. Since $\mathfrak{s}+\tau (\mathfrak{s})=\mathfrak{g}$, $\operatorname{dim} (\mathfrak{s})\geq \operatorname{dim}(G_2)/2=7$ and $\mathfrak{s}\subset \mathfrak{g}$ is regular.

\begin{lemma}\label{lemma:L}
The subalgebra $\mathfrak{s}\subset \mathfrak{g}$ is normalized by a Cartan subalgebra of $\mathfrak{g}_0$ and is not isomorphic to $\mathfrak{sl}_3(\mathbb C)$.
\end{lemma}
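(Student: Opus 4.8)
The plan is to leverage the two facts already established just before the statement: that $\mathfrak{s}+\tau(\mathfrak{s})=\mathfrak{g}$ forces $\dim(\mathfrak{s})\geq 7$, and that every such $\mathfrak{s}$ is a regular subalgebra of $\mathfrak{g}=G_2$, hence (after conjugating by an inner automorphism) is normalized by the fixed Cartan subalgebra $\mathfrak{h}$ and decomposes as $\mathfrak{s}=\mathfrak{a}\oplus\bigoplus_{\alpha\in R}\mathbb{C}X_\alpha$ for some subspace $\mathfrak{a}\subset\mathfrak{h}$ and some closed subsystem $R\subset\Phi$. First I would invoke the classification of regular subalgebras of $G_2$ from \cite{Mayanskiy} and \cite{Dynkin} to list, up to conjugacy, exactly those regular $\mathfrak{s}$ with $\dim\mathfrak{s}\geq 7$; this is a short finite list, since the $G_2$ root system has only $12$ roots and its closed subsystems are well understood (the maximal reductive regular subalgebras being $A_2$, $A_1\times A_1$, and the Borel/parabolic types).

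The first half of the conclusion — that $\mathfrak{s}$ is normalized by a Cartan subalgebra of the \emph{real} form $\mathfrak{g}_0$, not merely of $\mathfrak{g}$ — is the substantive part and should be handled exactly as in Theorem~\ref{theorem:main}. Concretely, I would pass to the normalizer $N(\mathfrak{s})\subset\mathfrak{g}$, observe that $N(\mathfrak{s})\supseteq\mathfrak{s}$ so $N(\mathfrak{s})+\tau(N(\mathfrak{s}))=\mathfrak{g}$, and note that $N(\mathfrak{s})$ is again regular of the same diagonal-plus-root-spaces shape $N(\mathfrak{s})=L\oplus\bigoplus_{\alpha\in R'}\mathbb{C}X_\alpha$ with $L\subset\mathfrak{h}$. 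The condition $N(\mathfrak{s})+\tau(N(\mathfrak{s}))=\mathfrak{g}$ restricted to the Cartan forces $L+\tau(L)=\mathfrak{h}$. Here the four conjugacy classes of Cartan subalgebras of $G_2^n$ matter: for each class $\mathfrak{l}$ the antiinvolution $\tau_n$ acts on $\mathfrak{h}=\mathfrak{l}\otimes\mathbb{C}$ in one of the four ways recorded above ($\mathrm{Id}$, $-\mathrm{Id}$, or a root reflection), and for the compact form $G_2^c$ it acts as $-\mathrm{Id}$ as in the previous theorem. In each case I would check that $L+\tau(L)=\mathfrak{h}$ together with $\tau$-compatibility of the root part forces $L=\mathfrak{h}$, so that the whole Cartan $\mathfrak{h}$ lies in $N(\mathfrak{s})$ and hence a Cartan subalgebra of $\mathfrak{g}_0$ normalizes $\mathfrak{s}$.

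For the second half — that $\mathfrak{s}$ is not isomorphic to $\mathfrak{sl}_3(\mathbb{C})$ — I would argue by contradiction using the $\tau$-action. The copy of $A_2=\mathfrak{sl}_3$ sitting regularly inside $G_2$ is the subsystem spanned by the six long roots; its root spaces, together with a two-dimensional slice of $\mathfrak{h}$, give a candidate $\mathfrak{s}$ of dimension $8\geq 7$. I would show that for this $A_2$ the admissibility condition $\mathfrak{s}+\tau(\mathfrak{s})=\mathfrak{g}$ fails, or that no admissible $2$-form $\omega$ exists, by examining how $\tau$ permutes the long-root spaces. For the compact form and for the maximally compact/noncompact Cartans, $\tau$ either sends long roots to long roots in a way that keeps $\tau(\mathfrak{s})$ inside the same $A_2+\mathfrak{h}$ (so the sum cannot be all of the $14$-dimensional $\mathfrak{g}$, since the short-root spaces are missed), contradicting $\mathfrak{s}+\tau(\mathfrak{s})=\mathfrak{g}$. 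I expect this to be the main obstacle: one must verify that for \emph{every} conjugacy class of Cartan subalgebra the $\tau_n$-image of the long-root subsystem still lies in the long-root subsystem (equivalently, that $\tau_n$ preserves root length), so that $\mathfrak{s}+\tau(\mathfrak{s})$ never acquires a short-root space. Since the four reflections and $\pm\mathrm{Id}$ are all length-preserving automorphisms of the $G_2$ root system, this check should go through uniformly, ruling out $\mathfrak{sl}_3(\mathbb{C})$.
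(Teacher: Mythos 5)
Your proposal has two genuine gaps, one in each half of the lemma, and in both cases the failure traces back to the same unjustified assumption: that $\tau$ stabilizes the Cartan subalgebra of $\mathfrak{g}$ with respect to which $\mathfrak{s}$ is regular. For the first half, you transplant the normalizer argument of Theorem~\ref{theorem:main}, writing $N(\mathfrak{s})=L\oplus\bigoplus_{\alpha\in R'}\mathbb{C}X_\alpha$ and imposing $L+\tau(L)=\mathfrak{h}$. But the expression $L+\tau(L)=\mathfrak{h}$ only makes sense if $\tau(\mathfrak{h})=\mathfrak{h}$, i.e.\ if $\mathfrak{h}$ is the complexification of a Cartan subalgebra of $\mathfrak{g}_0$ --- which is exactly the conclusion you are trying to prove, not a hypothesis you may use. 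In Theorem~\ref{theorem:main} the $\tau$-stable Cartan is produced by compactness-specific inputs (Malcev's closedness theorem and Wang's results), which are unavailable for the noncompact form $G_2^n$. The paper instead argues through the explicit classification in \cite{Mayanskiy}, Table~1: each candidate $\mathfrak{s}$ is either normalized by a Borel subalgebra --- and then Wolf's theorem that every Borel subalgebra of $\mathfrak{g}$ contains a maximally compact Cartan subalgebra of $\mathfrak{g}_0$ finishes that case --- or is the exceptional subalgebra $\mathfrak{s}_3=\mathfrak{h}_1\oplus\mathbb{C}Y_\beta\oplus\mathbb{C}Y_{-\beta}\oplus\mathbb{C}Y_{2\alpha+\beta}\oplus\mathbb{C}Y_{3\alpha+\beta}\oplus\mathbb{C}Y_{3\alpha+2\beta}$, whose elimination occupies most of the paper's proof via a delicate root-vector computation comparing $\mathfrak{s}_3$ against a Borel containing a real Cartan and using how $\tau$ acts on roots ($-\operatorname{Id}$ or a reflection through $\beta$). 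Your proposal never confronts $\mathfrak{s}_3$ at all; it is normalized by a Cartan of $\mathfrak{g}$, so it survives your reduction, and no soft normalizer argument disposes of it.

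For the second half, your exclusion of $A_2$ is incorrect as stated. You argue that since $\tau$ preserves root lengths, $\tau(\mathfrak{s})$ stays inside the same long-root $A_2$ plus $\mathfrak{h}$, so the short-root spaces are never reached. This presumes $\tau$ acts on the root decomposition relative to the Cartan $\mathfrak{h}_1\subset\mathfrak{s}$; when $\mathfrak{h}_1$ is not $\tau$-stable, $\tau(\mathfrak{s})$ is the long-root $A_2$ of the \emph{different} Cartan $\tau(\mathfrak{h}_1)$, and the dimension count $8+8-\dim(\mathfrak{s}\cap\tau(\mathfrak{s}))$ shows $\mathfrak{s}+\tau(\mathfrak{s})=\mathfrak{g}$ is perfectly possible when the intersection is $2$-dimensional. (If $\tau$ did stabilize $\mathfrak{h}_1$, then $\tau(\mathfrak{s})=\mathfrak{s}$ and the sum condition fails outright --- but that is the easy, non-generic situation.) The paper's actual argument does not attack the sum condition; it exploits the \emph{second} admissibility condition: $H^2(\mathfrak{sl}_3,\mathbb{C})=0$ forces $\omega=\operatorname{d}\xi$ exact, so if the $2$-dimensional $\tau$-stable intersection $\mathfrak{s}\cap\tau(\mathfrak{s})$ were abelian then $\omega\mid_{\mathfrak{s}\cap\mathfrak{g}_0}=0$, contradicting nondegeneracy of $\operatorname{Im}(\omega\mid_{\mathfrak{s}\cap\mathfrak{g}_0})$; in the non-abelian case one normalizes a basis $x_0$ (semisimple), $x_1$ (nilpotent) with $[x_0,x_1]=2x_1$, $\tau(x_0)=x_0$, $\tau(x_1)=x_1$, and runs the Jacobson--Morozov construction $\tau$-equivariantly to produce $x_2\in\mathfrak{s}$ with $\tau(x_2)=x_2$, whence $x_2\in\mathfrak{s}\cap\tau(\mathfrak{s})$ --- three independent elements in a $2$-dimensional space, a contradiction. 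You flagged ``or that no admissible $2$-form exists'' as a fallback but pursued only the sum-condition route, which is the one that fails.
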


\begin{proof}

By \cite{Dynkin}, up to conjugacy either $\mathfrak{s}=\mathfrak{g}$ or $\mathfrak{s}=A_2$ or $\mathfrak{s}\subset G_2[\beta]$ or $\mathfrak{s}\subset G_2[\alpha]$.\\

Suppose $\mathfrak{s}=A_2$. Since $\mathfrak{s}$ is semisimple, the $2$-dimensional subalgebra $\mathfrak{s} \cap \tau (\mathfrak{s})$ contains semisimple and nilpotent components of its elements. Since $H^2(\mathfrak{s},\mathbb C )=0$, $\omega$ is exact. Thus, if $\mathfrak{s} \cap \tau (\mathfrak{s})$ is abelian, $\omega \mid_{\mathfrak{s}\cap \mathfrak{g}_0}=0$, a contradiction. Hence $\mathfrak{s} \cap \tau (\mathfrak{s})$ is not abelian, and so every element of $\mathfrak{s} \cap \tau (\mathfrak{s})$ is either semisimple or nilpotent. Then we can choose a basis $x_0, x_1$ of $\mathfrak{s} \cap \tau (\mathfrak{s})$ such that $[x_0,x_1]=2\cdot x_1$, where $x_0$~is semisimple and $x_1$~is nilpotent. Since $\tau (x_1)\in \mathbb C x_1$, we may assume that $\tau (x_0)=x_0$, $\tau (x_1)=x_1$.\\

The proof of the Jacobson-Morozov theorem in \cite{Orbits} goes through and provides $x_2\in \mathfrak{s}$ such that $x_0,x_1,x_2$ span an $\mathfrak{sl}_2(\mathbb C)$ subalgebra of $\mathfrak{s}$. Since $\tau (x_2)=x_2$, we obtain a contradiction.\\

Suppose $\mathfrak{s}\subset G_2[\beta]$ or $\mathfrak{s}\subset G_2[\alpha]$. By \cite{Mayanskiy}, Table~$1$, $\mathfrak{s}$ either is solvable and contains a Cartan subalgebra of $\mathfrak{g}$ or is normalized by a Borel subalgebra of $\mathfrak{g}$ or is the subalgebra
$$
\mathfrak{s}_3 = \mathfrak{h}_1\oplus \mathbb C Y_{\beta}\oplus \mathbb C Y_{-\beta}\oplus \mathbb C Y_{2\alpha +\beta}\oplus \mathbb C Y_{3\alpha +\beta}\oplus \mathbb C Y_{3\alpha +2\beta},
$$
where $\mathfrak{h}_1\subset \mathfrak{g}$ is a Cartan subalgebra, $\Phi = \{ \pm \alpha , \pm \beta , \pm (\alpha +\beta ) , \pm (2\alpha +\beta ) , \pm (3\alpha +\beta ) , \pm (3\alpha +2\beta ) \}$ is the root system of $(\mathfrak{g} , \mathfrak{h}_1)$, $Y_{\gamma}$, $\gamma\in \Phi$, are root vectors.\\ 

Note that any Borel subalgebra $\mathfrak{b}\subset \mathfrak{g}$ contains a Cartan subalgebra $\mathfrak{h}_0\subset \mathfrak{g}_0$ \cite{Wolf}. Let $\mathfrak{h} = \mathfrak{h}_0\otimes_{\mathbb R}\mathbb C$.\\

If $\mathfrak{s}\subset \mathfrak{b}$ contains a Cartan subalgebra of $\mathfrak{g}$, then $\mathfrak{h}_0$ is maximally compact and $\mathfrak{h} \cap \mathfrak{s} \neq 0$. This implies that either $\mathfrak{h}$ normalizes $\mathfrak{s}$ or $\mathfrak{h} \cap \mathfrak{s}\subset \mathfrak{s}\cap \tau (\mathfrak{s})=0$, a contradiction.\\

Suppose $\mathfrak{s}=\mathfrak{s}_3$. Let $\mathfrak{b}\subset G_2[\alpha]$ be the Borel subalgebra of $(\mathfrak{g},\mathfrak{h}_1)$ containing $Y_{\alpha}$ and $Y_{\beta}$, $\mathfrak{n}=[\mathfrak{b},\mathfrak{b}]$. Since $[[\mathfrak{n},\mathfrak{n}],\mathfrak{n}]\subset \mathfrak{s}$, we may write 
$$
\mathfrak{s} = \mathfrak{h}_1\oplus \mathbb C x_3 \oplus \mathbb C x_4 \oplus \mathbb C X_{2\alpha +\beta}\oplus \mathbb C X_{3\alpha +\beta}\oplus \mathbb C X_{3\alpha +2\beta},
$$
$$
x_3=a_0\cdot X_{\alpha}+a_1\cdot X_{\alpha +\beta}+ X_{\beta}, \quad x_4=X_{-\beta}+b_0\cdot X_{\alpha}+b_1\cdot X_{\alpha +\beta},
$$
where $X_{\gamma}$, $\gamma\in \Phi$, are root vectors of $(\mathfrak{g},\mathfrak{h})$, $\mathfrak{n}$ contains $X_{\alpha}$ and $X_{\beta}$.\\

We may assume that $\mathfrak{h}_1=\mathbb C x_1\oplus \mathbb C x_2$, where
$$
x_1=z_1+X_{\rho}+\sum\limits_{\gamma \succ \rho}u_{\gamma}\cdot X_{\gamma},\quad x_2=z_2+\sum\limits_{\gamma \succ \rho}v_{\gamma}\cdot X_{\gamma},\quad z_1,z_2\in \mathfrak{h},\quad \rho (z_2)=0,
$$
for some $\rho\in \{ \alpha , \beta , \alpha + \beta \}$.\\

Since $\mathfrak{s}\ni [x_2,x_3]=\alpha (z_2)\cdot a_0\cdot X_{\alpha}+\beta (z_2)\cdot X_{\beta}+x_{23}$, $x_{23}\in [\mathfrak{n},\mathfrak{n}]$, and $(\alpha -\beta )(z_2)\neq 0$, $a_0=0$. If $\rho \neq \alpha$, then also
$$
\mathfrak{s}\ni (\alpha +\beta )(z_2)\cdot a_1\cdot X_{\alpha +\beta}+\beta (z_2)\cdot X_{\beta},
$$

and so $a_1=0$ in this case.\\

If $x_3=X_{\beta}$, then $[x_3,x_4]=H_{\beta} +b_0\cdot X_{\alpha +\beta}$. Hence $\rho \neq \alpha $, and so $[x_2,x_4]\in \mathfrak{s}$ implies that $b_1=0$. Then
$$
\mathfrak{s}=\mathbb C x_1' \oplus \mathbb C x_2' \oplus \mathbb C x_4 \oplus \mathbb C X_{\beta} \oplus \mathbb C X_{2\alpha + \beta} \oplus  \mathbb C X_{3\alpha + \beta} \oplus \mathbb C X_{3\alpha + 2\beta},
$$
where $x_1'=z_1'+u\cdot X_{\alpha +\beta}$, $x_2'=z_2'+v\cdot X_{\alpha +\beta}$, $z_1',z_2'\in \mathfrak{h}$. We may assume that $u=1$, $v=0$, and so $(\alpha +\beta )(z_2')=0$.\\

In this case, $\tau$ acts on the roots either as $-\operatorname{Id}$ or as a reflection through $\beta$. Hence either $z_2'$ or $X_{\beta}$ is contained in $\mathfrak{s}\cap \tau (\mathfrak{s})=0$, a contradiction.\\

Hence we may assume that $x_3$ is not proportional to a root vector, and so $\rho = \alpha$. In this case, $\mathfrak{s}$ contains $x_1'=z_1+X_{\alpha}+u\cdot X_{\alpha +\beta}$ and $x_2'=z_2+v\cdot X_{\alpha +\beta}$.\\

If $v\neq 0$, we may assume that $v=1$ and $u=0$. Since $[x_1',x_2']\in \mathfrak{s}$, $(\alpha +\beta )(z_1)=0$.\\

Since $[x_1',x_4]\in \mathfrak{s}$, $-\beta (z_1)\cdot X_{-\beta}+\alpha (z_1)b_0\cdot X_{\alpha}\in \mathfrak{s}$, and so $b_1=0$.\\

Since $[x_2',x_4]\in \mathfrak{s}$, $-\beta (z_2)\cdot X_{-\beta}- X_{\alpha}\in \mathfrak{s}$, and so $b_0=1/ \beta (z_2)$.\\

Since $[x_1',x_3]\in \mathfrak{s}$, $\beta (z_1)\cdot X_{\beta}- X_{\alpha +\beta}\in \mathfrak{s}$, and so $a_1=-1/ \beta (z_1)$. Hence
$$
\mathfrak{s} = \mathbb C x_1'\oplus \mathbb C x_2'\oplus \mathbb C x_3'\oplus \mathbb C x_4'\oplus \mathbb C X_{2\alpha +\beta}\oplus \mathbb C X_{3\alpha +\beta}\oplus \mathbb C X_{3\alpha +2\beta},
$$ 
where $x_1'=r_1\cdot H_{3\alpha +\beta}-r_2\cdot X_{-\beta}$, $x_2'=r_2\cdot H_{3\alpha +2\beta}-r_1\cdot X_{\beta}$, $x_3'=X_{\alpha +\beta} +r_1\cdot X_{\beta}$, $x_4'=X_{\alpha} +r_2\cdot X_{-\beta}$.\\

Since $\tau$ acts on the roots either as $-\operatorname{Id}$ or as a reflection through $\beta$, $\mathfrak{h}\oplus \mathbb C X_{\beta}\oplus \mathbb C X_{-\beta}$ is spanned by $x_1'$, $x_2'$, $\tau (x_1')$, $\tau (x_2')$. We can choose the root vectors such that $\tau (X_{\gamma})=\pm X_{\tau (\gamma )}$, $\gamma \in \Phi$.\\

If $\tau$ acts on the roots as $-\operatorname{Id}$, then $\tau (x_1')=-\overline{r_1}\cdot H_{3\alpha +\beta}\mp \overline{r_2}\cdot X_{\beta}$, $\tau (x_2')=-\overline{r_2}\cdot H_{3\alpha +2\beta}\mp \overline{r_1}\cdot X_{-\beta}$. Hence $\mathbb C X_{\beta}\oplus \mathbb C X_{-\beta}$ is spanned by a single element $\left( r_2/r_1 \right) \cdot X_{-\beta}\pm \overline{\left( r_2/r_1 \right) } \cdot X_{\beta}$, a contradiction.\\

If $\tau$ acts on the roots as a reflection through $\beta$, then $\tau (x_1')=\pm\overline{r_1}\cdot H_{3\alpha +2\beta }+ \overline{r_2}\cdot X_{-\beta}$, $\tau (x_2')=\pm\overline{r_2}\cdot H_{3\alpha +\beta }+ \overline{r_1}\cdot X_{\beta}$. Hence $\mathfrak{h}$ is spanned by a single element $\left( r_1/r_2 \right) \cdot H_{3\alpha + \beta}\pm \overline{\left( r_1/r_2 \right) } \cdot H_{3\alpha +2\beta }$, a contradiction.\\

If $v=0$, then $u=0$. Since $[x_2',x_4]\in \mathfrak{s}$, $X_{-\beta}-b_1\cdot X_{\alpha +\beta}\in \mathfrak{s}$, and so $b_0=b_1=0$. Since $\mathfrak{s}\cap \tau (\mathfrak{s})=0$ and $\alpha (z_2)=0$, $\mathfrak{g}_0=G_2^n$ and $\tau = \tau _n$ acts on the roots as a reflection through $\beta$. Hence $X_{-\beta}\in \mathfrak{s}\cap \tau (\mathfrak{s})$, a contradiction. 

\end{proof}

\begin{corollary}\label{corollary:L}
The subalgebra $\mathfrak{s}\subset \mathfrak{g}$ is normalized by a maximally compact Cartan subalgebra $\mathfrak{h}_0\subset \mathfrak{g}_0$. Moreover, either $\mathfrak{s}=L\oplus \mathfrak{n}$ or $\mathfrak{s}=\mathfrak{b}$, where $\mathfrak{b}\subset \mathfrak{g}$ is a Borel subalgebra of $(\mathfrak{g},\mathfrak{h})$, $L\subset \mathfrak{h}=\mathfrak{h}_0\otimes_{\mathbb R}\mathbb C$ and $\mathfrak{n}=[\mathfrak{b},\mathfrak{b}]$.
\end{corollary}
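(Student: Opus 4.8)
The plan is to read off the structure of $\mathfrak{s}$ from its root-space decomposition relative to a well-chosen Cartan subalgebra, using the admissibility of $(\mathfrak{s},\omega)$ to cut down the possibilities. By Lemma~\ref{lemma:L}, $\mathfrak{s}$ is normalized by a Cartan subalgebra $\mathfrak{h}_0\subset\mathfrak{g}_0$, and in each case left open by the proof of that lemma this Cartan may be taken maximally compact: when $\mathfrak{s}$ contains a Cartan subalgebra of $\mathfrak{g}$ this is shown there directly, and when $\mathfrak{s}$ is only normalized by a Borel subalgebra it follows from \cite{Wolf}, since a Borel contains a maximally compact Cartan. Writing $\mathfrak{h}=\mathfrak{h}_0\otimes_{\mathbb R}\mathbb C$ and $L=\mathfrak{s}\cap\mathfrak{h}$, the fact that $\mathfrak{h}$ normalizes $\mathfrak{s}$ gives
$$
\mathfrak{s}=L\oplus\bigoplus_{\alpha\in R}\mathbb C X_\alpha
$$
for a closed subset $R\subset\Phi$, so it remains to identify $R$ and $L$.

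First I would eliminate $\mathfrak{s}=\mathfrak{g}$. In that case $\mathfrak{s}\cap\mathfrak{g}_0=\mathfrak{g}_0$ would have to carry the non-degenerate $2$-form $\operatorname{Im}(\omega)$; but $\omega$ is closed on $\mathfrak{g}$, hence exact by Whitehead's lemma, and writing $\omega=d\lambda$ with $\lambda=B(w,\cdot)$ for the Killing form $B$ shows that the kernel of $\operatorname{Im}(\omega)\mid_{\mathfrak{g}_0}$ contains the centralizer of the imaginary part of $w$, which is nonzero. This contradicts admissibility, so $\mathfrak{s}\subsetneq\mathfrak{g}$.

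Because $\mathfrak{h}_0$ is maximally compact, $\tau$ acts on $\Phi$ as $-\operatorname{Id}$ — for $G_2^c$ this is automatic, and for $G_2^n$ it is the maximally compact entry of Sugiura's list \cite{Sugiura} — so $\tau(\mathbb C X_\alpha)=\mathbb C X_{-\alpha}$ and the condition $\mathfrak{s}+\tau(\mathfrak{s})=\mathfrak{g}$ becomes $R\cup(-R)=\Phi$ together with $L+\tau(L)=\mathfrak{h}$. The crucial step is to prove that $R\cap(-R)=\varnothing$. If instead some $\gamma$ had $\pm\gamma\in R$, then $\mathfrak{s}\cap\tau(\mathfrak{s})$ would contain $X_{\gamma},X_{-\gamma}$ and $H_{\gamma}=[X_{\gamma},X_{-\gamma}]\in L$; since $\tau(H_\gamma)=-H_\gamma$, the equality $L+\tau(L)=\mathfrak{h}$ would force $L=\mathfrak{h}$, making $\mathfrak{s}\cap\tau(\mathfrak{s})=\mathfrak{h}\oplus\mathbb C X_\gamma\oplus\mathbb C X_{-\gamma}$ reductive with one-dimensional centre $\mathbb C z$, $z\in\ker\gamma$. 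As $z$ is central and $\omega$ is closed, $\omega(z,\cdot)$ vanishes on $[\mathfrak{s}\cap\tau(\mathfrak{s}),\mathfrak{s}\cap\tau(\mathfrak{s})]$ and on $z$, hence on all of $\mathfrak{s}\cap\tau(\mathfrak{s})$; a nonzero real vector in the $\tau$-stable line $\mathbb C z$ then lies in the kernel of $\operatorname{Im}(\omega)\mid_{\mathfrak{s}\cap\mathfrak{g}_0}$, contradicting admissibility.

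With $R\cap(-R)=\varnothing$, $R\cup(-R)=\Phi$ and $R$ closed, the rank-two combinatorics of $G_2$ force $R$ to be a system of positive roots $\Phi^{+}$; equivalently $\bigoplus_{\alpha\in R}\mathbb C X_\alpha=\mathfrak{n}=[\mathfrak{b},\mathfrak{b}]$ for the Borel $\mathfrak{b}=\mathfrak{h}\oplus\mathfrak{n}$. Then $\mathfrak{s}=L\oplus\mathfrak{n}$, which is $\mathfrak{b}$ when $L=\mathfrak{h}$ and a proper $L\oplus\mathfrak{n}$ otherwise (with $L+\tau(L)=\mathfrak{h}$ forcing $\dim L\in\{1,2\}$). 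I expect the main obstacle to be exactly the vanishing $R\cap(-R)=\varnothing$: the remaining combinatorial alternatives all correspond to a parabolic $\mathfrak{s}$ with nontrivial reductive part, and ruling them out requires the structural interplay between the closedness of $\omega$ and the non-degeneracy of its imaginary part on $\mathfrak{s}\cap\tau(\mathfrak{s})$, rather than any property of the root combinatorics alone.
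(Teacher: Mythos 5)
Your strategy is genuinely different from the paper's proof, which obtains the short list of candidates for $\mathfrak{s}$ from Lemma~\ref{lemma:L} together with the classification table in \cite{Mayanskiy}, eliminates all but $L\oplus\mathfrak{n}$ and $\mathfrak{b}$ by citing \cite{Liana}, Lemma~7, and reads off maximal compactness only at the very end, from the fact that $\mathfrak{s}+\tau(\mathfrak{s})=\mathfrak{g}$ forces $\tau$ to act on the roots as $-\operatorname{Id}$. Your self-contained replacements (Whitehead's lemma plus the Killing-form computation to kill $\mathfrak{s}=\mathfrak{g}$, and the central-degenerate-direction argument, which in effect reproves the content of \cite{Liana}, Lemma~7) are sound as far as they go, but two steps have genuine gaps. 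First, your justification of maximal compactness is false as stated when $\mathfrak{g}_0=G_2^n$: it is not true that every Borel subalgebra of $\mathfrak{g}$ contains a maximally compact Cartan subalgebra of $\mathfrak{g}_0$. By \cite{Wolf}, Theorem~4.5 (as quoted in the paper), exactly the Borels in the open $G_0$-orbits do, and for the split form there are non-open orbits, e.g.\ the closed orbit through a Borel containing the complexified maximally split Cartan. Since everything downstream in your argument uses $\tau=-\operatorname{Id}$ on $\Phi$, this step is load-bearing. It is repairable, but only by invoking admissibility, which you do not use at this point: $\mathfrak{s}\subset\mathfrak{b}$ and $\mathfrak{s}+\tau(\mathfrak{s})=\mathfrak{g}$ give $\mathfrak{b}+\tau(\mathfrak{b})=\mathfrak{g}$, so the orbit of $\mathfrak{b}$ is open and Wolf's theorem then supplies the maximally compact Cartan, which normalizes $\mathfrak{s}$ because $\mathfrak{b}$ does.

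Second, in your crucial step you suppose $\pm\gamma\in R$, correctly deduce $L=\mathfrak{h}$, but then assert $\mathfrak{s}\cap\tau(\mathfrak{s})=\mathfrak{h}\oplus\mathbb{C}X_\gamma\oplus\mathbb{C}X_{-\gamma}$. In fact $\mathfrak{s}\cap\tau(\mathfrak{s})=\mathfrak{h}\oplus\bigoplus_{\delta\in S}\mathbb{C}X_\delta$ with $S=R\cap(-R)$ a symmetric closed subset, and in $G_2$ the possibilities beyond a single pair are the rank-two sets $A_1+\tilde{A}_1$ and the long-root $A_2$ (besides $\Phi$ itself). For rank-two $S$ the centre of this reductive subalgebra is zero, so your kernel argument produces no degenerate direction and no contradiction; these cases must be excluded separately. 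A short closure computation does it: for $S$ the long $A_2$ and, say, $\alpha\in R$, closedness gives $\alpha+\beta,\,2\alpha+\beta\in R$, then $-(3\alpha+2\beta)+(2\alpha+\beta)=-(\alpha+\beta)\in R$ and $-(3\alpha+\beta)+(2\alpha+\beta)=-\alpha\in R$, whence $R=\Phi$, i.e.\ $\mathfrak{s}=\mathfrak{g}$, which you already eliminated; the $A_1+\tilde{A}_1$ case collapses the same way. Without this supplement, $R\cap(-R)=\varnothing$ --- which you yourself identify as the main obstacle --- is not actually established. With these two repairs your proof closes, trading the paper's reliance on the table of \cite{Mayanskiy} and on \cite{Liana}, Lemma~7, for elementary root combinatorics, at the cost of exactly the case analysis you skipped.
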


\begin{proof}

Let $\mathfrak{h}_0\subset \mathfrak{g}_0$ be a Cartan subalgebra normalizing $\mathfrak{s}$, $\mathfrak{h}=\mathfrak{h}_0\otimes_{\mathbb R}\mathbb C$, $X_{\gamma}$, $\gamma \in \Phi$, root vectors of $(\mathfrak{g},\mathfrak{h})$.\\

By Lemma~\ref{lemma:L} and \cite{Mayanskiy}, Table~$1$, the subalgebra $\mathfrak{s}\subset \mathfrak{g}$ is one of the following:
$$
L\oplus \mathfrak{n},\quad \mathfrak{b}=\mathfrak{h}\oplus \mathfrak{n},\quad \mathbb C H_{\alpha}\oplus \mathbb C X_{-\alpha}\oplus \mathfrak{n},\quad \mathbb C H_{\beta}\oplus \mathbb C X_{-\beta}\oplus \mathfrak{n},
$$
$$
G_2[\beta]=\mathbb C X_{-\alpha}\oplus \mathfrak{b},\quad G_2[\alpha]=\mathbb C X_{-\beta}\oplus \mathfrak{b}, \quad \mathfrak{g},
$$
where $L\subset \mathfrak{h}$, $\mathfrak{n}=\bigoplus\limits_{\gamma \in {\Phi}^{+}}\mathbb C X_{\gamma}$ and $\Phi$ is suitably ordered so that ${\Phi}^{+} = \{ \alpha , \beta , \alpha +\beta , 2\alpha +\beta , 3\alpha +\beta , 3\alpha +2\beta \} \subset \Phi$ is the subset of positive roots.\\

By \cite{Liana}, Lemma~$7$, only two of these subalgebras can form a $\mathfrak{g}_0$-admissible pair:
$$
\mathfrak{s}=L\oplus \mathfrak{n} \quad \mbox{or}\quad \mathfrak{s}=\mathfrak{b}.
$$

In both cases, $\tau$ acts on the roots as $-\operatorname{Id}$.

\end{proof}

Let $\phi\colon G_0\to G$ be the universal complexification \cite{Hochschild}, i.e. $G$ is the connected complex simple Lie group of type $G_2$, with Lie algebra $\mathfrak{g}$, $\operatorname{ker}(\phi)$ is the center of $G_0$, and the differential of $\phi$ is the embedding $\mathfrak{g}_0\subset \mathfrak{g}$.\\

Let $B\subset G$ be a fixed Borel subgroup, with Lie subalgebra $\mathfrak{b}\subset \mathfrak{g}$ containing a maximally compact Cartan subalgebra $\mathfrak{h}_0\subset \mathfrak{g}_0$. By \cite{Wolf}, Theorem~$5.4$, $H_0=B\cap G_0$ is connected and so is generated by $\mathfrak{h}_0=\mathfrak{b}\cap \mathfrak{g}_0$.\\

Consider the flag manifold $G/B$ parametrizing the Borel subalgebras of $\mathfrak{g}$. Let $\mathcal{N}^{*}$ be the holomorphic homogeneous vector bundle over $G/B$ corresponding to the isotropy representation $B\to GL(\operatorname{Hom}_{\mathbb C}([\mathfrak{b},\mathfrak{b}],\mathbb C))$ coming from the adjoint action of $B$ on $\mathfrak{b}$.\\

Let $\mathcal{B}\subset G/B$ be the union of the open orbits of $G_0$. By \cite{Wolf}, Theorem~$4.5$, $\mathcal{B}$ parametrizes the Borel subalgebras containing a maximally compact Cartan subalgebra of $\mathfrak{g}_0$. If $G_0$ is compact, then $\mathcal{B}= G/B$. Otherwise, $\mathcal{B}$ consists of exactly three open orbits of $G_0$ on $G/B$, corresponding to the three Weyl chambers of $G_2$ contained in a Weyl chamber of $A_1+\tilde{A_1}$, \cite{Wolf}, Theorem~$4.7$.\\

Let 
$$
\mathcal{I}=\mathcal{B}\times GL(2,\mathbb R)/GL(1,\mathbb C), \quad \Sigma =\mathcal{B}\times {\Sigma}_0,\quad \mbox{where}\;\; {\Sigma}_0 = \{ \sigma \in \bigwedge ^2 ({\mathbb C }^{2})^{*}  \mid \operatorname{Im}(\sigma {\mid}_{{\mathbb R}^{2}}) \; \mbox{is symplectic} \},
$$
be the trivial bundles over $\mathcal{B}$ parametrizing the complex structures and certain extensions of symplectic structures on the fibers of $\mathcal{B}\times \mathfrak{h}_0\to \mathcal{B}$ respectively, $\mathfrak{h}_0$ identified with ${\mathbb R}^{2}$.\\

\begin{remark}
As sets, $GL(2,\mathbb R)/GL(1,\mathbb C) \cong \{ z\in \mathbb C \mid \operatorname{Im}(z)\neq 0 \} \cong {\Sigma}_0$.
\end{remark}

Now we state the main result of this section.

\begin{theorem}
Any invariant generalized complex structure on $G_0$, a real Lie group of type $G_2$ and real dimension $14$, is regular. The set of invariant generalized complex structures on $G_0$ is parametrized by the disjoint union
$$
\mathcal{C}\cup \mathcal{S},
$$
where $\mathcal{C}=\mathcal{I}\times_{\mathcal{B}}\mathcal{N}^{*}\cong \mathcal{N}^{*}{\mid}_{\mathcal{B}}\times GL(2,\mathbb R)/GL(1,\mathbb C)$ and $\mathcal{S}={\Sigma}\times_{\mathcal{B}}\mathcal{N}^{*}\cong \mathcal{N}^{*}{\mid}_{\mathcal{B}}\times {\Sigma}_0$.
\end{theorem}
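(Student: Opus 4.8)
The plan is to feed the structural dichotomy of Corollary~\ref{corollary:L} into a direct computation of the closed $2$-forms on each candidate subalgebra. Regularity is immediate: Corollary~\ref{corollary:L} already shows that the subalgebra $\mathfrak{s}$ attached to any invariant generalized complex structure is normalized by a maximally compact Cartan subalgebra $\mathfrak{h}_0\subset\mathfrak{g}_0$, which is the definition of regularity. By the same corollary it then suffices to treat the two cases $\mathfrak{s}=\mathfrak{b}$ and $\mathfrak{s}=L\oplus\mathfrak{n}$ with $L\subsetneq\mathfrak{h}$, in both of which $\tau$ acts on the roots as $-\operatorname{Id}$, so that $\tau(\mathfrak{n})$ is the sum of the negative root spaces. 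Since $\mathfrak{h}_0=\mathfrak{b}\cap\mathfrak{g}_0$ is recovered from $\mathfrak{b}$ by \cite{Wolf}, the choice of $\mathfrak{b}$ is exactly a point of $\mathcal{B}$, and I would organize the whole classification as a family over $\mathcal{B}$.

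Next I would pin down the admissibility conditions. Because $\mathfrak{n}+\tau(\mathfrak{n})$ already accounts for all $12$ root spaces, $\mathfrak{s}+\tau(\mathfrak{s})=\mathfrak{g}$ is equivalent to $L+\tau(L)=\mathfrak{h}$; when $\dim L=1$ this forces $\tau(L)\neq L$, i.e. $L$ is precisely the $\sqrt{-1}$-eigenspace of a complex structure on $\mathfrak{h}_0\cong\mathbb{R}^2$, which supplies the factor $GL(2,\mathbb{R})/GL(1,\mathbb{C})$ of $\mathcal{I}$, while $\mathfrak{s}\cap\mathfrak{g}_0=L\cap\mathfrak{h}_0=0$ makes the second admissibility condition vacuous. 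In the case $\mathfrak{s}=\mathfrak{b}$ one has instead $\mathfrak{s}\cap\mathfrak{g}_0=\mathfrak{h}_0\cong\mathbb{R}^2$, and since $\bigwedge^2\mathfrak{h}_0^*$ is one-dimensional the restriction $\omega\mid_{\mathfrak{h}_0}$ is a single complex multiple of the area form, whose imaginary part is non-degenerate exactly when that multiple has nonzero imaginary part; this is the parameter space $\Sigma_0$.

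The heart of the proof is the determination of the closed $2$-forms on each $\mathfrak{s}$. I would decompose $\bigwedge^2\mathfrak{s}^*$ along the root grading and evaluate $d\omega$ on the triple types. On triples $(\mathfrak{h},\mathfrak{h},X_\gamma)$ closedness forces $\omega(X_\gamma,\cdot)\mid_\mathfrak{h}$ to be proportional to $\gamma$, so the $\mathfrak{h}\otimes\mathfrak{n}$ part is encoded by one scalar $c_\gamma$ per positive root, i.e. by an element of $\mathfrak{n}^*=\operatorname{Hom}_{\mathbb{C}}([\mathfrak{b},\mathfrak{b}],\mathbb{C})$; on triples $(\mathfrak{h},X_\gamma,X_\delta)$ closedness then expresses every $\omega(X_\gamma,X_\delta)$ through the structure constants $N_{\gamma\delta}$ in terms of these scalars; and on triples $(X_\gamma,X_\delta,X_\eta)$ the resulting identity is exactly the Jacobi relation $N_{\gamma\delta}N_{\gamma+\delta,\eta}-N_{\gamma\eta}N_{\gamma+\eta,\delta}+N_{\delta\eta}N_{\delta+\eta,\gamma}=0$ and therefore holds automatically. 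Thus the closed forms on $\mathfrak{b}$ are parametrized by $\bigwedge^2\mathfrak{h}^*\oplus\mathfrak{n}^*\cong\mathbb{C}\oplus\mathfrak{n}^*$, and cutting out the admissible ones yields $\Sigma_0\times\mathfrak{n}^*$, the fibre of $\mathcal{S}$; the identical computation for $\mathfrak{s}=L\oplus\mathfrak{n}$, where the $\bigwedge^2\mathfrak{h}^*$ term disappears, gives closed forms parametrized by $\mathfrak{n}^*$ alone, the fibre of $\mathcal{C}$.

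The hard part, I expect, lies in two places. First, in the case $\mathfrak{s}=L\oplus\mathfrak{n}$ one divides by $(\gamma+\delta)(H_0)$ while solving for $\omega(X_\gamma,X_\delta)$, so I must rule out vanishing denominators; the key observation is that for an admissible $L$ (with $L\oplus\tau(L)=\mathfrak{h}$) no root can vanish on $H_0$, since $\mu(H_0)=0$ forces $\mu(\tau H_0)=-\overline{\mu(H_0)}=0$ and hence $L=\tau(L)=\ker\mu$, a contradiction. Second, I must verify that the scalar data $(c_\gamma)$, respectively the functional $\omega(\,\cdot\,,H_0)\mid_\mathfrak{n}$, transforms under change of Borel precisely as the fibre of $\mathcal{N}^*$, so that the pointwise identifications assemble into the asserted bundle isomorphisms $\mathcal{C}\cong\mathcal{N}^*\mid_{\mathcal{B}}\times GL(2,\mathbb{R})/GL(1,\mathbb{C})$ and $\mathcal{S}\cong\mathcal{N}^*\mid_{\mathcal{B}}\times\Sigma_0$; this is an equivariance computation for the homogeneous bundle. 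Finally, the union is disjoint because the two cases are distinguished intrinsically by $\mathfrak{s}$ (equivalently by $\dim_{\mathbb{R}}(\mathfrak{s}\cap\mathfrak{g}_0)\in\{0,2\}$), and Theorem~\ref{theorem:1} converts the resulting bijection on admissible pairs into the claimed parametrization of invariant generalized complex structures by $\mathcal{C}\cup\mathcal{S}$.
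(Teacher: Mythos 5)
Your proposal is correct and follows essentially the same route as the paper: regularity and the dichotomy $\mathfrak{s}=L\oplus\mathfrak{n}$ versus $\mathfrak{s}=\mathfrak{b}$ come from Corollary~\ref{corollary:L}, the first admissibility condition reduces to $L+\tau(L)=\mathfrak{h}$ (complex structures on $\mathfrak{h}_0$, resp.\ the factor $\Sigma_0$), and your root-graded cocycle computation is just the Chevalley--Eilenberg calculation the paper invokes to get $H^2(L\oplus\mathfrak{n},\mathbb C)=0$ and $H^2(\mathfrak{b},\mathbb C)=\mathbb C\,\omega_0$, so that $\omega=\operatorname{d}\xi$, resp.\ $\omega=c\,\omega_0+\operatorname{d}\xi$, with $\xi\in\operatorname{Hom}_{\mathbb C}([\mathfrak{b},\mathfrak{b}],\mathbb C)$. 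Your extra checks (the non-vanishing of $(\gamma+\delta)(H_0)$ for admissible $L$ --- where note that your argument, as it should, applies verbatim to sums $\gamma+\delta$ that are not roots, since it only uses that $\mu$ lies in the real span of $\Phi$ and $\mu^{\tau}=-\mu$ --- and the $B$-equivariance assembling the fibrewise data into $\mathcal{N}^{*}$) merely make explicit steps the paper leaves implicit.
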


\begin{proof}

We use the notation of Corollary~\ref{corollary:L}.\\

Suppose $\mathfrak{s}=L\oplus \mathfrak{n}$, $\operatorname{dim}(L)=1$. Then $\mathfrak{s}+\tau (\mathfrak{s})=\mathfrak{g}$ if and only if $L\subset \mathfrak{h}$ is the holomorphic subspace of a complex structure on $\mathfrak{h}_0$. Since $\mathfrak{s}\cap \tau (\mathfrak{s})=0$, any closed $2$-form $\omega \in \bigwedge ^2 {\mathfrak{s}}^{*}$ gives a $\mathfrak{g}_0$-admissible pair $(\mathfrak{s},\omega )$.\\

The Chevalley-Eilenberg resolution gives $H^2(\mathfrak{s},\mathbb C)=0$. Hence $\omega = \operatorname{d}\xi$ for a uniquely determined linear map $\xi \colon [\mathfrak{b},\mathfrak{b}]\to \mathbb C$.\\

Thus, the $\mathfrak{g}_0$-admissible pairs $(\mathfrak{s},\omega )$ with $\mathfrak{s}=L\oplus \mathfrak{n}$ are parametrized by the triples $(\mathfrak{b},\xi , \lambda)$, where $\mathfrak{b}\subset \mathfrak{g}$ is a Borel subalgebra containing a maximally compact Cartan subalgebra of $\mathfrak{g}_0$, $\xi \in \operatorname{Hom}_{\mathbb C}([\mathfrak{b},\mathfrak{b}],\mathbb C)$ and $\lambda \in GL(2,\mathbb R)/GL(1,\mathbb C)$ is a complex structure on the real vector space $\mathfrak{h}_0=\mathfrak{b}\cap \mathfrak{g}_0\cong {\mathbb R}^2$. Cf. \cite{Pittie}.\\

Suppose $\mathfrak{s}=\mathfrak{b}$. In this case, $H^2(\mathfrak{s},\mathbb C)=\mathbb C \cdot {\omega}_0$, where $0\neq {\omega}_0\in \bigwedge ^2 {\mathfrak{h}}^{*}$ is extended by zero to a $2$-form on $\mathfrak{s}$. Since $\mathfrak{b}\cap \mathfrak{g}_0=\mathfrak{h}_0$, a $2$-form $\omega\in \bigwedge ^2 {\mathfrak{b}}^{*}$ gives a $\mathfrak{g}_0$-admissible pair $(\mathfrak{b},\omega )$ if and only if $\omega = c\cdot {\omega}_0+\operatorname{d}\xi$ for a uniquely determined linear map $\xi \colon [\mathfrak{b},\mathfrak{b}]\to \mathbb C$, where $\operatorname{Im}(c\cdot \omega_0 {\mid}_{\mathfrak{h}_0})$ is non-degenerate.\\

Thus, the $\mathfrak{g}_0$-admissible pairs $(\mathfrak{s},\omega )$ with $\mathfrak{s}=\mathfrak{b}$ are parametrized by the triples $(\mathfrak{b},\xi , \sigma)$, where $\mathfrak{b}\subset \mathfrak{g}$ is a Borel subalgebra containing a maximally compact Cartan subalgebra of $\mathfrak{g}_0$, $\xi \in \operatorname{Hom}_{\mathbb C}([\mathfrak{b},\mathfrak{b}],\mathbb C)$ and $\sigma \in {\Sigma}_0$.

\end{proof}

As we recalled above, $\mathcal{B}$ consists of one or three orbits of $G_0$ \cite{Wolf}.

\begin{corollary}
The set of invariant generalized complex structures on $G_0$, up to conjugacy by $G_0$, is parametrized by $r$ copies of the disjoint union
$$
N_0^{*}\times GL(2,\mathbb R)/GL(1,\mathbb C)\cup N_0^{*}\times {\Sigma}_0,
$$
where $N_0^{*}=\operatorname{Hom}_{\mathbb C}([\mathfrak{b},\mathfrak{b}],\mathbb C)/H_0$, $r=1$ if $G_0$ is compact and $3$ otherwise.
\end{corollary}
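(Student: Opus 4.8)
The statement is the quotient of the parametrization in the preceding theorem by the conjugation action of $G_0$, so the plan is to make that action explicit and to evaluate the quotient one $G_0$-orbit of $\mathcal{B}$ at a time. First I would note that the correspondence of Theorem~\ref{theorem:1} is $G_0$-equivariant: conjugating an invariant generalized complex structure by $g\in G_0$ sends the associated pair $(\mathfrak{s},\omega)$ to its image under $\operatorname{Ad}(g)$. Hence the structures up to $G_0$-conjugacy are parametrized by $(\mathcal{C}\cup\mathcal{S})/G_0$, where $G_0$ acts on the total spaces $\mathcal{C}$ and $\mathcal{S}$ covering its action on $\mathcal{B}\subset G/B$ and transporting, by $\operatorname{Ad}(g)$, the nilpotent datum $\xi\in\operatorname{Hom}_{\mathbb C}([\mathfrak{b},\mathfrak{b}],\mathbb C)$ together with the Cartan datum (a complex structure $\lambda$ on $\mathfrak{h}_0$ in the case of $\mathcal{C}$, a form $\sigma\in\Sigma_0$ in the case of $\mathcal{S}$).

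Next I would invoke \cite{Wolf}: the open set $\mathcal{B}$ is a disjoint union of $r$ open $G_0$-orbits, with $r=1$ if $G_0$ is compact and $r=3$ otherwise. The stabilizer of a point $\mathfrak{b}\in\mathcal{B}$ is $N_{G_0}(\mathfrak{b})=B\cap G_0=H_0$, since a Borel subalgebra is self-normalizing and $H_0$ is connected by \cite{Wolf}, Theorem~$5.4$; thus each orbit is isomorphic to $G_0/H_0$. Over such an orbit both $\mathcal{C}$ and $\mathcal{S}$ are the $G_0$-equivariant associated bundles $G_0\times_{H_0}F$, where the fiber over the base point is $F=\operatorname{Hom}_{\mathbb C}([\mathfrak{b},\mathfrak{b}],\mathbb C)\times GL(2,\mathbb R)/GL(1,\mathbb C)$ for $\mathcal{C}$ and $F=\operatorname{Hom}_{\mathbb C}([\mathfrak{b},\mathfrak{b}],\mathbb C)\times\Sigma_0$ for $\mathcal{S}$. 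For an equivariant bundle over a single orbit one has $(G_0\times_{H_0}F)/G_0\cong F/H_0$, so the computation reduces to understanding the $H_0$-action on $F$.

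Finally I would identify that action. On $\operatorname{Hom}_{\mathbb C}([\mathfrak{b},\mathfrak{b}],\mathbb C)$ the group $H_0$ acts by the coadjoint representation, whose quotient is by definition $N_0^{*}=\operatorname{Hom}_{\mathbb C}([\mathfrak{b},\mathfrak{b}],\mathbb C)/H_0$. On the remaining factor $H_0$ acts trivially: being connected and abelian, generated by $\mathfrak{h}_0$, its adjoint action fixes $\mathfrak{h}_0$ and hence $\mathfrak{h}=\mathfrak{h}_0\otimes_{\mathbb R}\mathbb C$ pointwise, so it preserves every complex structure in $GL(2,\mathbb R)/GL(1,\mathbb C)$ and every form in $\Sigma_0\subset\bigwedge^2\mathfrak{h}^{*}$. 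Therefore $F/H_0$ equals $N_0^{*}\times GL(2,\mathbb R)/GL(1,\mathbb C)$ for $\mathcal{C}$ and $N_0^{*}\times\Sigma_0$ for $\mathcal{S}$; running over the $r$ orbits and taking the disjoint union of the two families yields the asserted parametrization by $r$ copies of $N_0^{*}\times GL(2,\mathbb R)/GL(1,\mathbb C)\cup N_0^{*}\times\Sigma_0$.

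The step most in need of care is the second one, pinning down the stabilizer as the connected group $H_0$ so that each orbit is genuinely $G_0/H_0$ and the associated-bundle quotient $(G_0\times_{H_0}F)/G_0\cong F/H_0$ applies cleanly; the remainder is the bookkeeping that distinguishes the coadjoint $H_0$-action on $\operatorname{Hom}_{\mathbb C}([\mathfrak{b},\mathfrak{b}],\mathbb C)$ from the trivial $H_0$-action on the complex-structure and symplectic factors.
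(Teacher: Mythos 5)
Your proposal is correct and follows exactly the route the paper intends: the paper states this corollary without proof, treating it as immediate from the preceding theorem together with Wolf's orbit count for $\mathcal{B}$ and the fact (\cite{Wolf}, Theorem~$5.4$, already invoked in the paper) that the stabilizer $H_0=B\cap G_0$ is connected and generated by $\mathfrak{h}_0$. Your explicit verification --- equivariance of the correspondence, reduction over each of the $r$ open orbits to the fiber quotient $F/H_0$, the coadjoint $H_0$-action on $\operatorname{Hom}_{\mathbb C}([\mathfrak{b},\mathfrak{b}],\mathbb C)$ giving $N_0^{*}$, and the trivial $H_0$-action on the $GL(2,\mathbb R)/GL(1,\mathbb C)$ and $\Sigma_0$ factors since $H_0$ fixes $\mathfrak{h}_0$ pointwise --- is precisely the argument left implicit in the paper.
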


The following remark is an immediate consequence of Milburn's study of invariant generalized complex structures on homogeneous spaces \cite{Milburn}.

\begin{remark}
There is no $SO(2n+1)$-invariant generalized complex structure on the $2n$-dimensional sphere $S^{2n}=SO(2n+1)/SO(2n)$, $n\geq 2$, and no $G_2$-invariant generalized complex structure on $S^{6}=G_2^c/SU(3)$. The $SO(3)$-invariant generalized complex structures on $S^{2}=SO(3)/SO(2)$ are two biholomorphic complex structures $\mathbb C \mathbb P ^1$ and $\overline{\mathbb C \mathbb P ^1}$, and a family of invariant generalized complex structures with holomorphic subbundles of the form $L(\mathfrak{so}_{3}(\mathbb C), {\omega}_c)$, $c\in \mathbb C$, $\operatorname{Im} (c)\neq 0$, which are B-transforms of the symplectic structures (up to symplectomorphism) on $S^2$. Notation is from \cite{Gualtieri}, \cite{Milburn}, ${\omega}_c \in \bigwedge ^2 {\mathfrak{so}_{3}(\mathbb C)}^{*}$ is defined by 
$$
{\omega}_c (X,Y)=\operatorname{Trace}(\begin{pmatrix} 0 & c & 0 \\ -c & 0 & 0 \\ 0 & 0 & 0 \end{pmatrix}\cdot [X,Y])
$$
for $3\times 3$ skew symmetric complex matrices $X, Y \in \mathfrak{so}_3(\mathbb C)$. See also \cite{Hitchin}.
\end{remark}

\section*{Acknowledgement}

The author is grateful to Beijing International Center for Mathematical Research, the Simons Foundation and Peking University for support, excellent working conditions and encouraging atmosphere.\\

\bibliographystyle{ams-plain}

\bibliography{SubregInvGCS}

\providecommand{\bysame}{\leavevmode\hbox to3em{\hrulefill}\thinspace}
\begin{thebibliography}{10}

\bibitem{Liana}
D.~V. Alekseevsky and L.~David, \emph{Invariant generalized complex structures
  on {L}ie groups}, Proceedings of the {L}ondon {M}athematical {S}ociety.
  {T}hird {S}eries \textbf{105} (2012), no.~4, 703--729.

\bibitem{Chebotarev}
N.~G. Chebotarev, \emph{A theorem of the theory of semi-simple {L}ie groups},
  Matematicheskii {S}bornik {N}.{S}. \textbf{11(53)} (1942), no.~3, 239--244.

\bibitem{Orbits}
D.~H. Collingwood and W.~M. McGovern, \emph{Nilpotent orbits in semisimple
  {L}ie algebras}, Van {N}ostrand {R}einhold {M}athematics {S}eries, Van
  {N}ostrand {R}einhold, New {Y}ork, 1993.

\bibitem{Dynkin}
E.~B. Dynkin, \emph{Semisimple subalgebras of semisimple {L}ie algebras},
  Matematicheskii {S}bornik {N}.{S}. \textbf{30(72)} (1952), no.~2, 349--462,
  (Russian).

\bibitem{Gualtieri}
M.~Gualtieri, \emph{Generalized complex geometry}, Annals of {M}athematics.
  {S}econd {S}eries \textbf{174} (2011), no.~1, 75--123.

\bibitem{Helgason}
S.~Helgason, \emph{Differential geometry, {L}ie groups, and symmetric spaces},
  Pure and {A}pplied {M}athematics, vol.~80, Academic {P}ress, {I}nc.,
  {N}ew~{Y}ork--{L}ondon, 1978.

\bibitem{Hitchin}
N.~Hitchin, \emph{Generalized {C}alabi-{Y}au manifolds}, The {Q}uarterly
  {J}ournal of {M}athematics \textbf{54} (2003), no.~3, 281--308.

\bibitem{Hochschild}
G.~Hochschild, \emph{Complexification of real analytic groups}, Transactions of
  the {A}merican {M}athematical {S}ociety \textbf{125} (1966), 406--413.

\bibitem{MalcevLarge}
A.~I. Malcev, \emph{On the theory of the {L}ie groups in the large}, Recueil
  {M}ath{\^e}matique ({N}.{S}.) \textbf{16(58)} (1945), 163--190.

\bibitem{Mayanskiy}
E.~Mayanskiy, \emph{The subalgebras of ${G}_2$}, Ar{X}iv preprint, 2016.

\bibitem{Milburn}
B.~Milburn, \emph{Generalized complex and {D}irac structures on homogeneous
  spaces}, Differential {G}eometry and its {A}pplications \textbf{29} (2011),
  no.~5, 615--641.

\bibitem{Pittie}
H.~V. Pittie, \emph{The {D}olbeault-cohomology ring of a compact,
  even-dimensional {L}ie group}, Indian {A}cademy of {S}ciences. {P}roceedings.
  {M}athematical {S}ciences \textbf{98} (1988), no.~2-3, 117--152.

\bibitem{Sugiura}
M.~Sugiura, \emph{Conjugate classes of {C}artan subalgebras in real semi-simple
  {L}ie algebras}, Journal of the {M}athematical {S}ociety of {J}apan
  \textbf{11} (1959), 374--434.

\bibitem{Wang}
H.-C. Wang, \emph{Closed manifolds with homogeneous complex structure},
  American {J}ournal of {M}athematics \textbf{76} (1954), 1--32.

\bibitem{Wolf}
J.~A. Wolf, \emph{The action of a real semisimple group on a complex flag
  manifold. {I}. {O}rbit structure and holomorphic arc components}, Bulletin of
  the {A}merican {M}athematical {S}ociety \textbf{75} (1969), 1121--1237.

\end{thebibliography}

\end{document}